\documentclass[journal=JOLT,lang=english,biblatex]{mersenne}
\usepackage{algorithm,algorithmic}

\newcommand{\dd}{\textnormal{d}}

\newcommand{\ad}{\textnormal{ad}}

\newcommand{\R}{\mathbb{R}}

\newcommand{\Lt}{\mathrm{L}}
\newcommand{\Rt}{\mathrm{R}}
\newcommand{\Tt}{\mathrm{T}}

\newcommand{\BCH}{\textnormal{BCH}}
\newcommand{\mbg}{\mathbf g}
\newcommand{\mbv}{\mathbf v}
\newcommand{\mbf}{\mathbf f}
\newcommand{\mbt}{\mathbf t}
\newcommand{\mbx}{\mathbf x}
\newcommand{\mby}{\mathbf y}
\newcommand{\mbr}{\mathbf r}
\newcommand{\mbz}{\mathbf z}
\newcommand{\mbthetabi}{\mathbf{\vartheta}^{\textnormal{BI}}}

\DeclareMathOperator{\Log}{Log}
\DeclareMathOperator{\Exp}{Exp}

\title{Bi-invariant Geodesic Regression: \\ Existence, Uniqueness, and Convergence}

\addauthor[
address = {Zuse Institute Berlin, Berlin Germany},
email = {hanik@zib.de},
corresponding,
]
{\firstname{Martin} \lastname{Hanik}}

\addauthor[
address = {Zuse Institute Berlin, Berlin Germany},
email = {vontycowicz@zib.de},
]{\firstname{Christoph} \vonname{von}\nobreakauthor\lastname{Tycowicz}}

\fundings{M.\@ Hanik is funded by the Deutsche Forschungsgemeinschaft (DFG, German Research
Foundation) under Germany's Excellence Strategy – The Berlin Mathematics Research Center MATH+ (EXC-2046/2, project ID: 390685689) \MRSgrant{EXC-2046/2, project ID: 390685689}}

\keywords{\kwd{Regression in Lie Groups}
\kwd{Baker-Campbell-Hausdorff Formula}
\kwd{Banach fixed point Theorem}}

\subjclass{22E30, 62J02, 53Z50}

\begin{abstract}
Bi-invariant geodesic regression generalizes linear regression to Lie groups. Its main feature is that it respects the symmetries of the group so that the resulting estimator is independent of arbitrary choices such as a reference frame. However, the local existence and uniqueness of the underlying estimator have not been shown until now. Furthermore, the convergence properties of the proposed algorithm for computing the estimator are not known. In this work, we investigate these questions. We prove that, locally, a unique estimator exists and give explicit bounds on the size of this neighborhood. We also show that the proposed iterative algorithm converges linearly to this estimator.
\end{abstract}

\begin{document}
\maketitle

\section{Introduction}
    Manifold-valued data arises in many applications and can often be modeled as elements of a Lie group. Examples include representations of skeletal systems in robotics~\cite{ParkBobrowPloen1995}, position- and motion-independent recognition of objects in computer vision~\cite{vemulapalli2014human,veeriah2015differential,vemulapalli2016rolling,huang2017deep}, and medical imaging~\cite{Boisvert_ea2008,vemulapalli2014human,hou2018computing,AmbellanZachowvonTycowicz2019_GL3,Pennec_ea2019_book}. Lie groups provide a natural framework for modeling continuous transformations and, in many applications, the group structure carries important geometric information that should be preserved by statistical methods.

However, using Lie groups for data representation typically involves arbitrary choices, such as the choice of a coordinate system or reference frame. To obtain statistical results that are independent of such choices, the underlying methods should respect the symmetries of the group, namely left and right translations and inversion~\cite{Hanik2022,lawson2026weighted,PennecLorenzi2020}. In particular, an estimator should transform consistently when the data are transformed by one of these symmetries. This property is commonly expressed in terms of equivariance.

Motivated by this principle, Schade, von Tycowicz, and Hanik~\cite{Schade2025Biinvariant} proposed Bi-invariant Geodesic Regression, a generalization of linear regression to Lie groups. The method is based on geodesics induced by a bi-invariant connection and is designed to preserve the symmetries of the underlying Lie group. In particular, the authors showed that the resulting estimator is equivariant under left and right translations 
%%% Include again when MedIA article is out %%%
%\textcolor{red}{and inversion}. 
They also proposed an iterative algorithm for computing the estimator. Thus, Bi-invariant Geodesic Regression provides a regression framework that combines the geometric structure of Lie groups with the symmetry properties required for faithful statistical analysis.

The theoretical properties of the estimator, however, were left open. In particular, it was not known under which conditions the estimator exists and is unique. Moreover, although an iterative algorithm was proposed for its computation, no convergence results were established. These questions are important both for the mathematical foundations of the method and for understanding the behavior of the numerical procedure.

In this work, we address these questions. Using a fixed point approach, we show that, for sufficiently small data, there exists a unique local estimator and derive explicit bounds on the size of the neighborhood in which existence and uniqueness are guaranteed. Furthermore, we prove that the proposed iterative algorithm is a contraction on this neighborhood and consequently converges linearly to the unique estimator. The local results can then be transferred to arbitrary reference points using the equivariance of the estimator and the iteration. 

The remainder of the paper is organized as follows. In Section~\ref{sec:background}, we recall the necessary background on Bi-invariant Geodesic Regression and its iterative estimation algorithm, and establish the equivariance properties of the estimator and the iterates. Section~\ref{sec:preliminaries} develops the local Lie-algebra formulation of the iteration and derives the expansion that forms the basis of our fixed point analysis. In Section~\ref{sec:existence}, we use this expansion to prove local existence and uniqueness of the estimator and establish linear convergence of the iterative algorithm. We conclude by discussing how the local results extend to data in a neighborhood of an arbitrary point through equivariance.

\section{Background and Problem Setting} \label{sec:background}

\subsection{Bi-invariant Geodesic Regression} \label{sec:biinvariant_regression}

Let $G$ be a finite-dimensional Lie group endowed with its canonical Cartan-Shouten (CCS) connection. Let $U \subset G$ be a normal-convex neighborhood; that is, any pair of points $f,g \in U$ can be connected by a unique geodesic $[0,1] \ni t \mapsto \gamma(t; f, g)$ that never leaves $U$. Every point in $G$ has such a neighborhood~\cite{Postnikov2013}. The exponential and logarithm of the CCS connection at $g \in U$ are given by~\cite[Corollary 5.1]{PennecLorenzi2020} 
    \begin{align} 
            \Exp_g(v) &= g \exp \left(\dd_g \Lt_{g^{-1}}(v) \right) = \exp \left(\dd_g \Rt_{g^{-1}}(v) \right) g, \quad v \in T_gG, \label{eq:EXP}\\ 
            \Log_g(f) &= \dd_e\Lt_g \big(\log(g^{-1}f)\big) = \dd_e\Rt_g \big(\log(fg^{-1})\big), \quad f \in U. \nonumber %\label{eq:LOG}
        \end{align}
The geodesics are translated one-parameter subgroups, that is, for all $t \in [0,1]$,
\begin{align}
\begin{split}\label{eq:geodesic}
            \gamma(t; g, f) &= \Exp_g \big(t \Log_g(f) \big) \\
            &= g \exp \big( t\log(g^{-1}f) \big) \\
            &= \exp \big( t\log(fg^{-1}) \big) g.
        \end{split}
    \end{align}

For a $G$-valued random variable $Y$ and a non-random, real-valued variable $t$, the generalization~\cite{Fletcher2013} of the linear regression model to the manifold setting is the geodesic model
\begin{equation} \label{eq:model}
    Y = \Exp_{\gamma(t;g_0, g_1)}\bigl(\epsilon \bigr),
\end{equation}
where $\epsilon$ is a random variable taking values in the tangent space at $\gamma(t; g_0, g_1)$. The points $g_0, g_1 \in G$ are the model's parameters that must be estimated from a data set $(f_1, t_1), \dots, (f_N, t_N) \in U \times [0,1]$; we denote such data  in short by $(\mbf,\mbt)$.

From now on, the parameter $t$ of $\gamma$ often becomes a subscript when the boundary parameters are what we are interested in.

In \cite{Schade2025Biinvariant}, Schade, von Tycowicz, and Hanik propose the bi-invariant estimator.
\begin{definition} \label{def:bi_inv_estiamtor}
Let $U \subseteq G$ be a normal convex neighborhood. Furthermore, let $(\mbf,\mbt) \in U^N \times [0,1]^N$. The \textit{bi-invariant estimator} for (\ref{eq:model}) is the geodesic between the points $\mbthetabi(\mbf,\mbt):= (\hat g_0,\hat g_1) \in U^2$ that satisfy\footnote{We denote partial derivatives with respect to the $j$-th component by $\partial_j$.}
\begin{align}
    \sum_{i=1}^N(1-t_i)^2 \bigl( \partial_0 \gamma_{t_i}(\hat g_0,\hat g_1) \bigr)^{-1} \left (\Log_{\gamma_{t_i}(\hat g_0,\hat g_1)}(f_i) \right)&= 0 \qquad\textnormal{and} \label{eq:bir1} \\
    \sum_{i =1}^N t_i^2 \bigl( \partial_1 \gamma_{t_i}(\hat g_0,\hat g_1) \bigr)^{-1} \left( \Log_{\gamma_{t_i}(\hat g_0,\hat g_1)}(f_i) \right) &= 0. \label{eq:bir2}
\end{align}
\end{definition}

\subsection{Iterative Estimation Algorithm}

Whenever we encounter $G^k$, $k \ge 2$, we use the standard product structure and product connection such that the group operation and the connection with its associated maps operate entirely component-wise across each factor. This is true, for example, for $\Exp / \exp$, $\Log / \log$, and the left and right translations $\Lt$ and $\Rt$. For $k\geq 1$, we use the same notation for the component-wise extensions of $\Exp$, $\exp$, $\Log$, and $\log$; for example,
$$
\exp:(T_eG)^k\to G^k,\qquad
\exp(x_1,\ldots,x_k)
:=
(\exp x_1,\ldots,\exp x_k),
$$
and, whenever defined,
$$
\log:G^k\to(T_eG)^k,\qquad
\log(g_1,\ldots,g_k)
:=
(\log g_1,\ldots,\log g_k).
$$
Thus, the meaning of each map is determined by the domain and codomain.
Furthermore, for $\Tt \in \{\Lt, \Rt \}$, $h\in G$ and $\mbx=(x_1,\ldots,x_k)\in G^k$, we use the notation
$$
\Tt_h(\mbx)
:=
\bigl(\Tt_h(x_1),\ldots,\Tt_h(x_k)\bigr),
$$
that is, $\Tt_h$ acts component-wise on tuples.
\bigskip

Setting $b_i=\gamma_{t_i}(g_0,g_1)$ for $i=1,\dots,N$ and writing $\mbg = (g_0, g_1)$ for elements in $U^2$, define
\begin{align} \label{eq:v}
        \mbv \left(\mbg, \mbf, \mbt \right) &:= \begin{pmatrix}
        \sum_{i=1}^N (1-t_i)^2 \bigl( \partial_0 \gamma_{t_i}(g_0, g_1) \bigr)^{-1} \left( \Log_{b_i} (f_i)\right), \\ 
        \sum_{i=1}^N t_i^2 \bigl( \partial_1 \gamma_{t_i}(g_0, g_1) \bigr)^{-1} \left( \Log_{b_i} (f_i)\right)
        \end{pmatrix} \in T_{\mbg}G^2.
\end{align}
The components of $\mbv$ are the left-hand sides of \eqref{eq:bir1} and \eqref{eq:bir2}; they act as a net force, which the data points apply (through the geodesic) to $g_0$ and $g_1$. Schade et al.\ propose the following algorithm to iteratively compute the estimator: Starting with the initial guess $\mbg_0 := (g_0, g_1) \in U^2$, we compute the iterates
\begin{align} \label{alg:algorithm}
        \mbg_{n+1} := \Exp_{\mbg_n} \biggl( \lambda\, \mbv \Bigl(\mbg_n, \mbf, \mbt \Bigr) \biggr), \qquad n=1,2,\dots
\end{align}
with a small enough step size $\lambda > 0$ until convergence.

An important feature of the algorithm is that it is equivariant under right and left translations. To show this, we first establish the equivariance of the Cartan--Schouten exponential.
\begin{lemma}
\label{lem:equivariance_exp}
For
$$
\Tt\in\{\Lt,\Rt\},\qquad h,g\in G,\qquad v\in T_gG,
$$
we have
$$
\Exp_{\Tt_h(g)}
\bigl(\dd_g \Tt_h(v)\bigr)
=
\Tt_h\bigl(\Exp_g(v)\bigr).
$$
\end{lemma}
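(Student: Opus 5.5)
The plan is to verify the identity directly from the two representations of $\Exp$ in \eqref{eq:EXP}, treating left and right translations separately. In each case I will use whichever representation makes the translation commute past the exponential, and reduce everything to the chain rule for differentials of compositions of translations. The relevant elementary facts are:
\begin{itemize}
\item $\Lt_a\circ\Lt_b=\Lt_{ab}$ and $\Rt_a\circ\Rt_b=\Rt_{ba}$;
\item left and right translations commute, $\Lt_a\circ\Rt_b=\Rt_b\circ\Lt_a$.
\end{itemize}

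For $\Tt=\Lt$, I will use the first formula in \eqref{eq:EXP} at the point $hg$:
\[
\Exp_{hg}\bigl(\dd_g\Lt_h(v)\bigr)=hg\exp\bigl(\dd_{hg}\Lt_{(hg)^{-1}}\,\dd_g\Lt_h(v)\bigr).
\]
By the chain rule, $\dd_{hg}\Lt_{g^{-1}h^{-1}}\circ\dd_g\Lt_h=\dd_g(\Lt_{g^{-1}h^{-1}}\circ\Lt_h)=\dd_g\Lt_{g^{-1}}$. The expression therefore becomes
\[
h\,g\exp\bigl(\dd_g\Lt_{g^{-1}}(v)\bigr)=\Lt_h\bigl(\Exp_g(v)\bigr),
\]
which is the claim.

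For $\Tt=\Rt$, I will use the second formula in \eqref{eq:EXP} at the point $gh$:
\[
\Exp_{gh}\bigl(\dd_g\Rt_h(v)\bigr)=\exp\bigl(\dd_{gh}\Rt_{(gh)^{-1}}\,\dd_g\Rt_h(v)\bigr)\,gh.
\]
Here $\Rt_{h^{-1}g^{-1}}\circ\Rt_h=\Rt_{g^{-1}}$, since $x\mapsto xhh^{-1}g^{-1}=xg^{-1}$. The expression therefore equals
\[
\exp\bigl(\dd_g\Rt_{g^{-1}}(v)\bigr)\,g\,h=\Rt_h\bigl(\Exp_g(v)\bigr).
\]

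No step is a real obstacle. The only care needed is the bookkeeping of the order of composition in the right-translation case, and making sure that for each $\Tt$ the matching representation of $\Exp$ is chosen, since that is what lets the translation factor out. The equality of the two representations in \eqref{eq:EXP} is taken from \cite{PennecLorenzi2020}, so using either one at any base point is legitimate.
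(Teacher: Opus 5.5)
Your proposal is correct and follows the paper's proof: the left-translation case is word for word the same chain-rule computation using the first representation in \eqref{eq:EXP}. For right translations, which the paper only calls analogous, your use of the second representation together with $\Rt_{(gh)^{-1}}\circ\Rt_h=\Rt_{g^{-1}}$ is exactly the intended argument.
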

\begin{proof}
We start with the case $\Tt = \Lt$.
We have
$$
\Lt_{hg}=\Lt_h\circ \Lt_g
$$
and
$$
\Lt_{(hg)^{-1}}\circ \Lt_h
=
\Lt_{g^{-1}h^{-1}}\circ \Lt_h
=
\Lt_{g^{-1}}.
$$
Hence, by the chain rule,

$$
\dd_{hg}\Lt_{(hg)^{-1}}\circ\dd_g\Lt_h
=
\dd_g\Lt_{g^{-1}},
$$
and therefore
$$
\begin{aligned}
\Exp_{hg}\bigl(\dd_g\Lt_h(v)\bigr)
&=
hg\exp\left(
\dd_{hg}\Lt_{(hg)^{-1}}
\bigl(\dd_g\Lt_h(v)\bigr)
\right)\\
&=
hg\exp\left(\dd_g\Lt_{g^{-1}}(v)\right)\\
&=
h\Exp_g(v).
\end{aligned}
$$

The proof works analogously for right translations.
\end{proof}

Having established the equivariance of $\Exp$, the same can be shown for the algorithm.
\begin{proposition}[Translation-equivariance of the algorithm]
\label{prop:equivariance_algorithm}
Let data $f_1,\dots,f_N\in U$ and
$$
\tilde f_i:= \Tt_{h}(f_i), \qquad \Tt \in \{\Lt, \Rt\},
$$ for some $h \in G$.
Let further $\mbg_n$ and $\tilde \mbg_n$ be the iterates~\eqref{alg:algorithm} for the original and translated data, respectively. If the two initial iterates are $g_0$ and $\Tt_{h}(g_0)$, then
$$
\tilde \mbg_n = \Tt_h (\mbg_n)
$$
for all $n$ for which they are well defined.
\end{proposition}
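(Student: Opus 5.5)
We argue by induction on $n$; the base case is the hypothesis $\tilde\mbg_0=\Tt_h(\mbg_0)$. For the inductive step we show that the force field is equivariant,
\begin{equation*}
\mbv\bigl(\Tt_h(\mbg),\Tt_h(\mbf),\mbt\bigr)=\dd_{\mbg}\Tt_h\bigl(\mbv(\mbg,\mbf,\mbt)\bigr),
\end{equation*}
with the differential acting component-wise on $T_{\mbg}G^2$. Given this identity and $\tilde\mbg_n=\Tt_h(\mbg_n)$, linearity of $\dd\Tt_h$ moves $\lambda$ inside, and Lemma~\ref{lem:equivariance_exp}, applied component-wise on $G^2$, gives
\begin{equation*}
\tilde\mbg_{n+1}=\Exp_{\Tt_h(\mbg_n)}\bigl(\dd_{\mbg_n}\Tt_h(\lambda\mbv(\mbg_n,\mbf,\mbt))\bigr)=\Tt_h(\mbg_{n+1}).
\end{equation*}

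To prove the equivariance of $\mbv$, we check each ingredient separately.

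\textbf{Geodesics.} From \eqref{eq:geodesic},
\begin{equation*}
\gamma_t(hg_0,hg_1)=hg_0\exp\bigl(t\log(g_0^{-1}h^{-1}hg_1)\bigr)=h\,\gamma_t(g_0,g_1),
\end{equation*}
and for right translations we use the second form $\exp(t\log(g_1g_0^{-1}))g_0$ in the same way. Hence $\gamma_t\circ(\Tt_h\times\Tt_h)=\Tt_h\circ\gamma_t$. In particular $\tilde b_i=\Tt_h(b_i)$, provided all points stay in a domain where $\log$ is defined, which is implicit in ``well defined''.

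\textbf{Logarithm.} By the same computation with the formula for $\Log$ (or by inverting Lemma~\ref{lem:equivariance_exp}),
\begin{equation*}
\Log_{\Tt_h(b)}\bigl(\Tt_h(f)\bigr)=\dd_b\Tt_h\bigl(\Log_b(f)\bigr).
\end{equation*}

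\textbf{Partial derivatives.} Differentiating $\gamma_t(\Tt_h g_0,\Tt_h g_1)=\Tt_h\gamma_t(g_0,g_1)$ with respect to $g_j$ gives, by the chain rule,
\begin{equation*}
\partial_j\gamma_t(\Tt_h\mbg)\circ\dd_{g_j}\Tt_h=\dd_{\gamma_t(\mbg)}\Tt_h\circ\partial_j\gamma_t(\mbg).
\end{equation*}
Since $\Tt_h$ is a diffeomorphism, its differentials are invertible. So whenever $\partial_j\gamma_t(\mbg)$ is invertible, so is $\partial_j\gamma_t(\Tt_h\mbg)$, and
\begin{equation*}
\bigl(\partial_j\gamma_t(\Tt_h\mbg)\bigr)^{-1}\circ\dd_{\gamma_t(\mbg)}\Tt_h=\dd_{g_j}\Tt_h\circ\bigl(\partial_j\gamma_t(\mbg)\bigr)^{-1}.
\end{equation*}

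\textbf{Assembling.} Substitute these three facts into each summand of \eqref{eq:v}. The weights $(1-t_i)^2$ and $t_i^2$ are unchanged because $\mbt$ is not transformed. The linear map $\dd_{g_j}\Tt_h$ then factors out of each sum, which yields the claimed identity for $\mbv$.

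The main obstacle is the partial-derivative step. It needs the inverse intertwining relation, and it needs invertibility of $\partial_j\gamma_t$ to transfer between $\mbg$ and $\Tt_h(\mbg)$, which is exactly what the relation above shows. It also needs care with the fact that $\partial_j\gamma_t$ maps $T_{g_j}G\to T_{\gamma_t}G$, so that the differentials at the correct base points appear. The remaining steps are direct substitutions, and the right-translation case runs identically using the right-translation forms of \eqref{eq:EXP} and \eqref{eq:geodesic}.
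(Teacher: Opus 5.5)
Your proof is correct and follows the paper's argument: induction on $n$, the equivariance of $\mbv$, and Lemma~\ref{lem:equivariance_exp} applied component-wise. The only difference is that you derive the equivariance of $\mbv$ yourself, from the intertwining relations for $\gamma_t$, $\Log$, and the inverted partial derivatives, whereas the paper cites the proof of Theorem~1 in Schade et al.\ for that identity.
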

\begin{proof}
The proof proceeds by induction over $n$. By assumption,
$$
\tilde{\mbg}_0=\Tt_h(\mbg_0),
$$
so the assertion holds for $n=0$.

Suppose that
$$
\tilde{\mbg}_n=\Tt_h(\mbg_n)
$$
for some $n$. The update vector $\mbv$ is equivariant:
$$
\mbv\bigl(\Tt_h(\mbg_n),\Tt_h(\mbf),\mbt\bigr)
=
\dd_{\mbg_n}\Tt_h
\bigl(
\mbv(\mbg_n,\mbf,\mbt)
\bigr);
$$
see the proof of~\cite[Theorem 1]{Schade2025Biinvariant} for a derivation.

Consequently,
$$
\begin{aligned}
\tilde{\mbg}_{n+1}
&=
\Exp_{\tilde{\mbg}_n}
\left(
\lambda\,
\mbv(\tilde{\mbg}_n,\Tt_h(\mbf),\mbt)
\right)
\\
&=
\Exp_{\Tt_h(\mbg_n)}
\left(
\lambda\,
\dd_{\mbg_n}\Tt_h
\bigl(
\mbv(\mbg_n,\mbf,\mbt)
\bigr)
\right)
\\
&=
\Tt_h
\left(
\Exp_{\mbg_n}
\left(
\lambda\,
\mbv(\mbg_n,\mbf,\mbt)
\right)
\right)
\\
&=
\Tt_h(\mbg_{n+1}),
\end{aligned}
$$

where the third equality follows from Lemma~\ref{lem:equivariance_exp} because $\Exp$ works component-wise. Thus the assertion holds for $n+1$, completing the induction.
\end{proof}  

\subsection{Equivariance of the Estimator}

Schade et al.\ show that this estimator is equivariant under left/right translations.
\begin{theorem}[Translation-equivariance of the estimator] \label{thm:main_trans}
    Let $(\mbf,\mbt) \in U^N \times [0,1]^N$. Then, for all $h \in G$,
    \begin{align*}
        \mbthetabi \left( \Tt_h(\mbf), \mbt \right) &= \Tt_h \bigl(\mbthetabi (\mbf, \mbt) \bigr), \qquad \Tt_h \in \{ \Lt_h, \Rt_h \}.
    \end{align*} 
\end{theorem}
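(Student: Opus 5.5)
The plan is to show directly that the defining equations \eqref{eq:bir1}--\eqref{eq:bir2} are preserved under translation. Concretely, I will prove that $\mbg=(\hat g_0,\hat g_1)$ solves them for the data $(\mbf,\mbt)$ if and only if $\Tt_h(\mbg)$ solves them for $(\Tt_h(\mbf),\mbt)$. I will do the case $\Tt=\Lt$ in detail; the case $\Tt=\Rt$ is analogous, using the right-translation forms of \eqref{eq:EXP} and \eqref{eq:geodesic}.

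The key identity is the equivariance of the force field $\mbv$ from \eqref{eq:v}:
$$
\mbv\bigl(\Tt_h(\mbg),\Tt_h(\mbf),\mbt\bigr)=\dd_{\mbg}\Tt_h\bigl(\mbv(\mbg,\mbf,\mbt)\bigr).
$$
This is the identity already used in the proof of Proposition~\ref{prop:equivariance_algorithm}. To keep the argument self-contained, I will verify it in three steps.

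\emph{Step 1: geodesics.} By \eqref{eq:geodesic},
$$
\gamma_t(hg_0,hg_1)=hg_0\exp\bigl(t\log(g_0^{-1}g_1)\bigr)=h\,\gamma_t(g_0,g_1).
$$
So translating the endpoints translates the whole geodesic, and $b_i$ becomes $hb_i$. Differentiating $\gamma_t(\Lt_h g_0,\Lt_h g_1)=\Lt_h\gamma_t(g_0,g_1)$ in each argument gives
$$
\partial_j\gamma_t(hg_0,hg_1)\circ\dd_{g_j}\Lt_h=\dd_{b}\Lt_h\circ\partial_j\gamma_t(g_0,g_1),\qquad j=0,1.
$$
Hence $\bigl(\partial_j\gamma_t(hg_0,hg_1)\bigr)^{-1}=\dd_{g_j}\Lt_h\circ\bigl(\partial_j\gamma_t(g_0,g_1)\bigr)^{-1}\circ(\dd_b\Lt_h)^{-1}$.

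\emph{Step 2: logarithms.} From the formula for $\Log$ and $(hb)^{-1}hf=b^{-1}f$,
$$
\Log_{hb}(hf)=\dd_e\Lt_{hb}\log(b^{-1}f)=\dd_b\Lt_h\,\Log_b(f).
$$

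\emph{Step 3: summation.} Inserting Steps 1 and 2 into each summand of \eqref{eq:v}, the factors $(\dd_b\Lt_h)^{-1}$ and $\dd_b\Lt_h$ cancel. Each summand then becomes $\dd_{g_j}\Lt_h$ applied to the original summand. Because $\dd_{g_j}\Lt_h$ is linear, it can be pulled out of the sum, which yields the displayed identity.

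Since $\dd_{\mbg}\Tt_h$ is a linear isomorphism, $\mbv(\mbg,\mbf,\mbt)=0$ holds exactly when $\mbv(\Tt_h\mbg,\Tt_h\mbf,\mbt)=0$. The translated estimator must also lie in the relevant neighborhood. Here I take $\Tt_h(U)$, which is again normal convex because translations are affine maps of the CCS connection (Lemma~\ref{lem:equivariance_exp}) and therefore map geodesics to geodesics. This gives $\mbthetabi(\Tt_h\mbf,\mbt)=\Tt_h\mbthetabi(\mbf,\mbt)$.

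The main obstacle is Step 1: making the derivative identity for $\partial_j\gamma$ rigorous and checking that invertibility transfers. Both follow because the identity comes from differentiating a composition with a diffeomorphism, so one side is invertible exactly when the other is. A second, more conceptual subtlety is that the estimator is only defined up to solutions in a chosen neighborhood. The statement should therefore be read as a bijection between solution sets in $U$ and in $\Tt_h(U)$, and with that reading the equality of estimators follows.
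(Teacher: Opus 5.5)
Your proposal is correct and follows the route the paper relies on. The paper does not prove this theorem itself but cites Schade et al., and in the proof of Proposition~\ref{prop:equivariance_algorithm} it uses exactly your key identity $\mbv(\Tt_h(\mbg),\Tt_h(\mbf),\mbt)=\dd_{\mbg}\Tt_h(\mbv(\mbg,\mbf,\mbt))$, which you obtain from the equivariance of geodesics, of their endpoint differentials, and of $\Log$; zeros of $\mbv$ then correspond because $\dd_{\mbg}\Tt_h$ is a linear isomorphism, and your choice to look for the translated estimator in the normal convex set $\Tt_h(U)$ is a correct reading of the statement.
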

In the coming sections, we investigate the fixed point properties of the sequence~\eqref{alg:algorithm}. Since the estimator and the sequence to compute it (Proposition~\ref{prop:equivariance_algorithm}) are equivariant under translations, we can thereby focus on data in a neighborhood of the identity. Once we prove existence, uniqueness, and convergence for this scenario, it can be easily translated to the general case.

\section{Preliminaries and Local Expansion} \label{sec:preliminaries}

In this, section we investigate the local update map that the algorithm induces. To this end, necessary Lie algebra preliminaries are established first. In the following we can often choose whether we express maps (for example, the geodesic map $\gamma$) with left or right translations. We always pick the former convention. 

\subsection{Lie Algebra Preliminaries}
 
Let $x, y\in T_eG$. Recall that $\ad_x(y) = [x,y]$. We define 
\begin{equation} \label{eq:ad_m}
    \ad_x^n(y) := \ad_x \circ \dots \circ \ad_x (y) = [x[x[\dots [x[x,y]]\dots]]],
\end{equation}
where $\ad_x$ and $x$ appear $n$ times in the middle and on the right-hand side, respectively. Clearly, 
\begin{equation} \label{eq:ad_dist}
    \ad_x^n \circ \ad_x^k = \ad_x^{n+k}.
\end{equation}

We will need the fact~\cite[Section~1.5]{duistermaat2012lie} that the differential of the exponential $\dd_x \exp: T_xT_eG \cong T_eG \to T_{\exp(x)}G$ is given by
\begin{equation} \label{eq:derivative_exp}
    \dd_x \exp = \dd_e \Lt_{\exp(x)} \circ \sum_{n=0}^\infty \frac{(-1)^n \ad_x^n}{(n+1)!}.
\end{equation} 
Remember the Bernoulli numbers are given by
\begin{align*}
    B_0 &:= 1, \\
    B_n &:= -\frac{1}{n+1} \sum_{k=0}^{n-1} \binom{n+1}{k} B_k, \quad n = 1,2,\dots.
\end{align*}
The differential of the group logarithm ~\cite[Section~1.6]{duistermaat2012lie} can be expressed as
\begin{equation} \label{eq:derivative_log}
    \dd_{g} \log = \sum_{n=0}^\infty \frac{(-1)^n B_n \ad_{\log(g)}^n}{n!} \circ \dd_g \Lt_{g^{-1}}
\end{equation}
as long as $\rho(\ad_{\log(g)}) := \textnormal{max}\{|\mu|: \mu \textnormal{ is an eigenvalue of } \ad_{\log(g)} \} < 2 \pi$.

The following two results yield a local expression for the inverted geodesic operator differentials appearing in $\mathbf{v}$.
\begin{lemma} \label{lem:diff_2_gam}
    Let $U \subseteq G$ be a normal convex neighborhood and $f,g \in U$. Let further $u \in T_{\gamma(t;g,f)}G$ and $\bar u := \dd_{\gamma(t;g,f)}L_{\gamma(t;g,f)^{-1}}(u)$ its representative in $T_eG$ that is obtained by left translation. With this, the endpoint derivative of a Cartan-Shouten geodesic is given by
    \begin{equation*}
        \bigl( \partial_1 \gamma_t (g,f) \bigr)^{-1} (u)
        = \sum_{m=0}^\infty C_m(t) \dd_e \Lt_f \bigl( \ad_{\log(g^{-1}f)}^m(\bar{u}) \bigr), 
    \end{equation*}
where 
$$C_m(s) := (-1)^{m} \sum_{k=0}^m \frac{s^{k-1} B_k}{(m-k+1)!k!},$$
whenever $\rho(\ad_{\log{(g^{-1}f})}) < 2\pi$.
\end{lemma}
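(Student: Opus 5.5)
The plan is to compute $\partial_1\gamma_t(g,f)$ explicitly in left-trivialized form using \eqref{eq:derivative_exp} and \eqref{eq:derivative_log}, and then invert it. Everything involved turns out to be a power series in the single operator $\ad_x$ with $x:=\log(g^{-1}f)$, so all factors commute and the inverse can be read off directly. Throughout, assume $t\in(0,1]$; at $t=0$ the map $\partial_1\gamma_0$ vanishes and $C_m$ contains $s^{-1}$.

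\textbf{Step 1: the forward derivative.} Write $\gamma(t;g,f)=g\exp\bigl(t\log(g^{-1}f)\bigr)$ using the left convention \eqref{eq:geodesic}. Take $w\in T_fG$ and set $\bar w:=\dd_f\Lt_{f^{-1}}(w)$. By the chain rule, the variation of $g^{-1}f$ is $\dd_f\Lt_{g^{-1}}(w)$. Since $\Lt_{(g^{-1}f)^{-1}}\circ\Lt_{g^{-1}}=\Lt_{f^{-1}}$, \eqref{eq:derivative_log} gives
\[
\dd\log\bigl(\dd_f\Lt_{g^{-1}}w\bigr)=\beta(x)\bar w,\qquad \beta(x):=\sum_{n\ge0}\frac{(-1)^nB_n\ad_x^n}{n!}.
\]
Next apply \eqref{eq:derivative_exp} at the point $tx$ to the vector $t\beta(x)\bar w$, and then left-translate by $g$. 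This yields
\[
\partial_1\gamma_t(g,f)(w)=\dd_e\Lt_{\gamma(t;g,f)}\bigl(t\,\alpha(tx)\beta(x)\bar w\bigr),\qquad \alpha(y):=\sum_{n\ge0}\frac{(-1)^n\ad_y^n}{(n+1)!}.
\]
In other words, in the notation of the lemma, $\bar u=t\,\alpha(tx)\beta(x)\bar w$.

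\textbf{Step 2: inversion.} As formal power series, $\alpha(z)=(1-e^{-z})/z$ and $\beta(z)=z/(1-e^{-z})$, so $\alpha\beta=1$. Evaluated on $\ad_x$, this is just the statement that $\dd\exp$ and $\dd\log$ are mutually inverse. Both series define analytic functions of $\ad_x$ on the spectral region $\rho<2\pi$, and by \eqref{eq:ad_dist} they commute. Since $t\le1$, we have $\rho(\ad_{tx})=t\rho(\ad_x)<2\pi$, so $\alpha(tx)$ is invertible with inverse $\beta(tx)$. Therefore
\[
\bar w=t^{-1}\alpha(x)\beta(tx)\bar u,\qquad w=\dd_e\Lt_f(\bar w).
\]

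\textbf{Step 3: Cauchy product.} Note that $\ad_{tx}^k=t^k\ad_x^k$. Multiplying the two series with \eqref{eq:ad_dist} gives the coefficient of $\ad_x^m$ in $t^{-1}\alpha(x)\beta(tx)$ as
\[
t^{-1}\sum_{k=0}^m\frac{(-1)^kB_kt^k}{k!}\cdot\frac{(-1)^{m-k}}{(m-k+1)!}=C_m(t).
\]
This is exactly the claimed formula.

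\textbf{Main obstacle.} The delicate step is justifying the operator identities rather than the algebra. Specifically, one must show that $\alpha(\ad_x)\beta(\ad_x)=\mathrm{id}$ and that the rearranged Cauchy product converges to $\alpha(x)\beta(tx)$ under only the spectral hypothesis $\rho(\ad_x)<2\pi$.

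I would handle this with the holomorphic functional calculus. Both $\alpha$ and $\beta(t\,\cdot)$ are holomorphic on the disc $|z|<2\pi$, which contains the spectrum of $\ad_x$. The functional calculus is multiplicative, and the Taylor series of a function holomorphic on a disc containing the spectrum converges to that function of the operator. Hence the product series, which is the Taylor series of $t^{-1}\alpha(z)\beta(tz)$, converges to the desired operator. Convergence of the series for $\beta(tx)$ is the point where $t\le1$ is used.
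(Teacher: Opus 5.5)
Your proof is correct and follows essentially the same route as the paper: the chain rule applied to $\gamma_t(g,\cdot)=\Lt_g\circ\exp\circ\, t\log\circ\,\Lt_{g^{-1}}$, substitution of the series for $\dd\exp$ and $\dd\log$, inversion, and a Cauchy product in powers of $\ad_{\log(g^{-1}f)}$. The only difference is the order of two steps: the paper first inverts the chain-rule factors at the group level via $(\dd_y\exp)^{-1}=\dd_{\exp(y)}\log$ and then expands, whereas you expand first and invert through the identity $\alpha\beta=1$; your functional-calculus argument also supplies the convergence justification, and the restriction $t\in(0,1]$, which the paper leaves implicit.
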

\begin{proof}
    Note that
    \begin{equation*}
        \gamma_{t,g} = \Lt_g \circ \exp \circ\, t\log \circ\, \Lt_{g^{-1}}.
    \end{equation*}
    Thus, due to the chain rule,
    \begin{equation*}
        \partial_1 \gamma_t (g,f) = \dd_{\exp(t\log(g^{-1}f))} \Lt_g \circ \dd_{t\log(g^{-1}f)} \exp \circ\, t\, \dd_{g^{-1}f}\log \circ\, \dd_f\Lt_{g^{-1}}.
    \end{equation*}
    It follows that
    \begin{align*}
        \bigl( \partial_1 \gamma_t (g,f) \bigr)^{-1} &= \bigl( \dd_{\exp(t\log(g^{-1}f))} \Lt_g \circ \dd_{t\log(g^{-1}f)} \exp \circ\, t\, \dd_{g^{-1}f}\log \circ\, \dd_f\Lt_{g^{-1}} \bigr)^{-1} \\
        &= \bigl( \dd_f\Lt_{g^{-1}} \bigr)^{-1} \,\circ\, \frac{1}{t}\, \bigl( \dd_{g^{-1}f}\log \bigr)^{-1}  \\ 
        &\qquad \qquad \circ \bigl( \dd_{t\log(g^{-1}f)} \exp \bigr)^{-1} \,\circ\, \bigl( \dd_{\exp(t\log(g^{-1}f))} \Lt_g \bigr)^{-1} \\
        &= \dd_{g^{-1}f}\Lt_g \,\circ\, \frac{1}{t}\, \dd_{\log(g^{-1}f)}\exp \\ 
        &\qquad \qquad \circ\, \dd_{\exp(t\log(g^{-1}f))} \log \,\circ\, \dd_{g\exp(t\log(g^{-1}f))} \Lt_{g^{-1}}.
    \end{align*}
    
    When $\rho(\ad_{\log{(g^{-1}f})}) < 2\pi$, Equations~\eqref{eq:derivative_exp} and \eqref{eq:derivative_log} and the bilinearity of ad yield
    \begin{align*}
    \bigl( \partial_1 \gamma_t (g,f) \bigr)^{-1} =\
        &\dd_{g^{-1}f}\Lt_g \\
        &\circ \frac{1}{t}\; \dd_e \Lt_{g^{-1}f} \\ &\circ \sum_{n=0}^\infty \frac{(-1)^n \ad_{\log(g^{-1}f)}^n}{(n+1)!} \\
        &\circ \sum_{k=0}^\infty \frac{(-1)^k t^k B_k \ad_{\log(g^{-1}f)}^k}{k!} \\
        &\circ \dd_{\exp(t\log(g^{-1}f))} \Lt_{\exp(t\log(g^{-1}f))^{-1}} \\
        &\circ\, \dd_{g\exp(t\log(g^{-1}f))} \Lt_{g^{-1}}.
    \end{align*}
    Since $\Lt_g \circ \Lt_f = \Lt_{gf}$ for all $g,f \in G$ the chain rule and the definition of $\gamma$ give
    \begin{align*}
        \bigl( \partial_1 \gamma_t (g,f) \bigr)^{-1} =\ &\frac{1}{t}\; \dd_e \Lt_f \\
        &\circ \sum_{n=0}^\infty \frac{(-1)^n \ad_{\log(g^{-1}f)}^n}{(n+1)!} \\
        &\circ \sum_{k=0}^\infty \frac{(-1)^k t^k B_k \ad_{\log(g^{-1}f)}^k}{k!} \\
        &\circ \dd_{\gamma(t;g,f)} \Lt_{\gamma(t;g,f)^{-1}}. \\
    \end{align*}
    Let $u \in T_{\gamma(t;g,f)}G$ and $\bar{u} := \dd_{g\exp(t\log(g^{-1}f))} \Lt_{(g\exp(t\log(g^{-1}f))^{-1}}(u)$ its left-translation at $e$. It follows with \eqref{eq:ad_dist} that
        \begin{align*}
        \bigl( \partial_1 \gamma_t (g,f) \bigr)^{-1}(u) &=\ \frac{1}{t}\; \dd_e \Lt_f \\
        &\quad \quad \circ \sum_{n=0}^\infty \frac{(-1)^n \ad_{\log(g^{-1}f)}^n}{(n+1)!} \\
        &\quad \quad \circ \sum_{k=0}^\infty \frac{(-1)^k t^k B_k \ad_{\log(g^{-1}f)}^k}{k!} \\
        &\quad \quad \circ \dd_{\gamma(t;g,f)} \Lt_{\gamma(t;g,f)^{-1}} (u) \\
        &= \frac{1}{t}\; \dd_e \Lt_f \Biggl( \sum_{n=0}^\infty \frac{(-1)^n}{(n+1)!} \ad_{\log(g^{-1}f)}^n \biggl( \sum_{k=0}^\infty \frac{(-1)^k t^k B_k}{k!}\ad_{\log(g^{-1}f)}^k(\bar{u}) \biggr) \Biggr) \\
        &= \sum_{n=0}^\infty \sum_{k=0}^\infty \frac{(-1)^{n+k} t^{k-1} B_k}{(n+1)!k!} \dd_e \Lt_f \bigl( \ad_{\log(g^{-1}f)}^{n+k}(\bar{u}) \bigr). \\
    \end{align*}
    
    We now set $m := n+k$ and collect all terms with power $m$. Since $n \ge 0$ and $n = m - k$, we finally get
    \begin{align*}
            \bigl( \partial_1 \gamma_t (g,f) \bigr)^{-1}(u) &= \sum_{m=0}^\infty \sum_{k=0}^m \frac{(-1)^{m-k+k} t^{k-1} B_k}{(m-k+1)!k!} \dd_e \Lt_f \bigl( \ad_{\log(g^{-1}f)}^m(\bar{u}) \bigr) \\
            &= \sum_{m=0}^\infty (-1)^{m} \sum_{k=0}^m \frac{t^{k-1} B_k}{(m-k+1)!k!} \dd_e \Lt_f \bigl( \ad_{\log(g^{-1}f)}^m(\bar{u}) \bigr).
    \end{align*}

\end{proof}

% Note that $\ad_v = \Ad_{\exp(v)}$ and, thus, also
% \begin{equation*}
%     \bigl( \partial_1 \gamma_t (g,f) \bigr)^{-1}(u) = \dd_e\Lt_f \circ \frac{\textnormal{I} - \Ad_{g^{-1}f}}{\textnormal{I} - \Ad_{\exp(t\log(g^{-1}f))}} (\bar u)
% \end{equation*}

The lemma implies a similar formula for the derivative with respect to the start point of the geodesic.
\begin{corollary} \label{cor:diff_1_gam}
    With the same requirements as in Lemma~\ref{lem:diff_2_gam}, we have
    \begin{equation*}
        \bigl( \partial_0 \gamma_t(g,f) \bigr)^{-1} (u)  = \sum_{m=0}^\infty C_m(1-t) \dd_e \Lt_g \bigl( \ad_{\log(f^{-1}g)}^m(\bar{u}) \bigr)
    \end{equation*}
    whenever $\rho(\ad_{\log(f^{-1}g)}) < 2\pi$.
\end{corollary}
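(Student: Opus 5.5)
The plan is to reduce the statement to Lemma~\ref{lem:diff_2_gam} by reversing the geodesic. By the uniqueness of geodesics in the normal convex neighborhood $U$, or directly from \eqref{eq:geodesic}, we have
\begin{equation*}
\gamma(t;g,f)=\gamma(1-t;f,g)\qquad\text{for all } t\in[0,1].
\end{equation*}
To verify this from the formula, write $x:=\log(g^{-1}f)$. Then $\log(f^{-1}g)=-x$, and
\begin{equation*}
f\exp\bigl((1-t)(-x)\bigr)=g\exp(x)\exp\bigl((t-1)x\bigr)=g\exp(tx).
\end{equation*}
The first equality uses $f=g\exp(x)$, and the second uses that $\exp(x)$ and $\exp((t-1)x)$ lie in the same one-parameter subgroup, so they commute and combine.

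Next we differentiate. Differentiating the identity $\gamma_t(g,f)=\gamma_{1-t}(f,g)$ with respect to the first argument $g$ shows that $\partial_0\gamma_t(g,f)$ coincides with the derivative of $\gamma_{1-t}(f,\cdot)$ with respect to its endpoint, evaluated at $g$. In the notation of Lemma~\ref{lem:diff_2_gam} this reads
\begin{equation*}
\partial_0\gamma_t(g,f)=\partial_1\gamma_{1-t}(f,g).
\end{equation*}
Both operators map $T_gG$ to $T_{\gamma(t;g,f)}G$, and $\gamma(t;g,f)=\gamma(1-t;f,g)$ is the same base point. Hence their inverses agree as maps from $T_{\gamma(t;g,f)}G$ to $T_gG$.

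We then apply Lemma~\ref{lem:diff_2_gam} with the roles of $f$ and $g$ exchanged and with $t$ replaced by $1-t$. Since the base point is unchanged, the left-translated representative of $u$ is the same $\bar u$ as before. The hypothesis of the lemma becomes $\rho(\ad_{\log(f^{-1}g)})<2\pi$, which is exactly the assumption of the corollary. The lemma then gives the sum
\begin{equation*}
\sum_{m=0}^\infty C_m(1-t)\, \dd_e\Lt_g\bigl(\ad^m_{\log(f^{-1}g)}(\bar u)\bigr),
\end{equation*}
which is the claimed formula.

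The only delicate point is checking that the lemma's proof does not rely on $t\neq 1-t$ in a way that breaks the substitution. The factor $s^{k-1}$ in $C_m(s)$ becomes singular at $s=0$, that is, at $t=1$. This matches the fact that $\partial_0\gamma_t$ degenerates at $t=1$, so the formula is meant for $t\in[0,1)$, exactly as Lemma~\ref{lem:diff_2_gam} is meant for $t\in(0,1]$. I would add this restriction explicitly to the statement.

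The remaining checks are routine. First, the substitution in the lemma is legitimate because $\gamma(1-t;f,g)\in U$ holds by normal convexity. Second, the base point $f$ of the lemma becomes $g$, so the $\dd_e\Lt_f$ there turns into the $\dd_e\Lt_g$ here.
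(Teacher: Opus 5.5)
Your proof is correct and follows the paper's argument: the paper's proof simply observes that $\gamma(t;g,f)=\gamma(1-t;f,g)$ and applies Lemma~\ref{lem:diff_2_gam} with $f$ and $g$ swapped and $t$ replaced by $1-t$, which is exactly what you do. Your additional checks are correct details that the paper leaves implicit: $\bar u$ is unchanged because the base point is the same, the hypothesis becomes $\rho(\ad_{\log(f^{-1}g)})<2\pi$, and the formula requires $t\in[0,1)$ because $\partial_0\gamma_1(g,f)=0$.
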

\begin{proof}
    Using $\gamma(t;g,f) = \gamma(1-t; f,g)$, the result follows directly from Lemma~\ref{lem:diff_2_gam}. 
\end{proof}
 We are ready to define the update map.

\subsection{Local Expansion of the Update Map}

For the rest of this work, let $U\subset G$ be a normal convex neighborhood of the identity, chosen sufficiently small such that
$$
V:=\log(U)\subseteq T_eG
$$
is a neighborhood of $0$, the BCH formula (stated below) converges on $V\times V$, and
$$
\rho\left(\ad_{\log(g^{-1}f)}\right)<2\pi
\qquad\text{for all }f,g\in U.
$$
We endow $T_eG$ with an auxiliary norm to measure the size of neighborhoods. To this end, we choose an arbitrary norm $\|\cdot\|$ induced by an inner product $\langle\cdot,\cdot\rangle$. 
\bigskip

Set
\begin{equation*}
    \mbx := \begin{pmatrix}
            x_0 \\
            x_1
        \end{pmatrix} \in V^2
        \quad \text{ and } \quad
        \mby := \begin{pmatrix}
            y_1 \\
            \vdots \\
            y_N
        \end{pmatrix} \in V^{N}.
\end{equation*}
The central object of our study is the update map in logarithmic coordinates:
There exist open neighborhoods $W_{\mbx}\subseteq V^2$ and
$W_{\mby}\subseteq V^N$ of $0$, and $\lambda_0>0$, such that
\begin{align*}
    \Psi: W_{\mbx} \times W_{\mby} \times [0,\lambda_0] &\to V^2, \\
    (\mbx, \mby; \lambda) &\mapsto  \log \bigl( \Exp_{\exp(\mbx)} \bigl( \lambda \mbv \bigl(\exp(\mbx), \exp(\mby), \mbt \bigr) \bigr) \bigr)
\end{align*}
is well defined.
The map $\Psi$ represents the iterates~\eqref{alg:algorithm} in $T_eG$. We shall show that $\Psi$ has a fixed point if the data is sufficiently local. Existence, uniqueness, and convergence then follow directly from this.

Our first observation is that $\Psi$ has a very high degree of smoothness.
\begin{proposition}\label{prop:analytic}
    The mapping $\Psi$ is analytic in a neighborhood of 0.
\end{proposition}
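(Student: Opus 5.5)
We will prove Proposition~\ref{prop:analytic} by writing $\Psi$ entirely in terms of the group operations $\exp$, $\log$, the BCH product, $\ad$, and finitely many polynomial operations in $t_i$ and $\lambda$. Analyticity will then follow from closure of analyticity under composition, sums and products, together with convergence of the relevant power series near $0$. Throughout, $G$ carries its real-analytic structure, so multiplication, inversion, $\exp$ and $\log$ are real analytic. Equivalently, in logarithmic coordinates the map $(x,y)\mapsto \log(\exp x\exp y)=\BCH(x,y)$ is given by a convergent series on $V\times V$.

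\textbf{Step 1: left-translated form of each factor.} Fix $\mbx=(x_0,x_1)$, $\mby$, and set $g_j=\exp(x_j)$, $f_i=\exp(y_i)$. Then
\[
\log(g_0^{-1}g_1)=\BCH(-x_0,x_1),\qquad \log(b_i)=\BCH\bigl(x_0,t_i\BCH(-x_0,x_1)\bigr)=:z_i .
\]
Next, left-translate $\Log_{b_i}(f_i)$ to $e$, which gives $\bar u_i=\log(b_i^{-1}f_i)=\BCH(-z_i,y_i)$. All of these are compositions of BCH maps with linear maps, hence analytic near $0$.

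\textbf{Step 2: the inverse differentials.} By Lemma~\ref{lem:diff_2_gam} and Corollary~\ref{cor:diff_1_gam}, the left-translated components of $\mbv$ at $g_1$ and $g_0$ are
\[
\sum_m C_m(t_i)\,\ad^m_{\BCH(-x_0,x_1)}(\bar u_i)\quad\text{and}\quad \sum_m C_m(1-t_i)\,\ad^m_{\BCH(-x_1,x_0)}(\bar u_i).
\]
The constraint $\rho<2\pi$ is assumed on $U$, but for analyticity we need more than pointwise convergence. We will show that the operator series $\sum_m C_m(s)\,\ad_w^m$ converges absolutely and locally uniformly in $w$ near $0$, which makes it an analytic function of $w$.

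The estimate we use is $\|\ad_w^m\|\le (c\|w\|)^m$, combined with a bound on the coefficients. Since $|B_k|/k!\le 2\cdot(2\pi)^{-k}\cdot$const, we get $|C_m(s)|\le K r^{-m}$ for some $r>0$, uniformly for $s\in[0,1]$. The factor $s^{k-1}$ at $k=0$ needs care: the $k=0$ term is $s^{-1}/(m+1)!$. We must therefore check that $t_i^2 C_m(t_i)$ and $(1-t_i)^2C_m(1-t_i)$ remain bounded. They do, because of the weights $t_i^2$ and $(1-t_i)^2$ appearing in $\mbv$. Consequently, on the ball $\|w\|<r/c$ the series is a norm-convergent power series in $w$ (composed with the linear map $\bar u\mapsto$ operator), hence analytic. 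Summing over the finitely many $i$ gives analyticity of the left-translated $\mbv$ as a map of $(\mbx,\mby)$.

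\textbf{Step 3: the exponential step.} Using \eqref{eq:EXP}, the $j$-th component of the update is
\[
\Psi_j=\log\bigl(g_j\exp(\lambda\,\bar v_j)\bigr)=\BCH\bigl(x_j,\lambda\bar v_j(\mbx,\mby)\bigr),
\]
where $\bar v_j$ is the left-translated force from Step 2. This holds because the left-trivialization in \eqref{eq:EXP} is exactly the $\bar v_j$ computed there, with the outer $\dd_e\Lt_{g_j}$ cancelling. Shrinking $W_{\mbx}$, $W_{\mby}$ and $\lambda_0$ keeps $(x_j,\lambda\bar v_j)$ inside the convergence domain of BCH, since $\bar v_j$ is continuous and vanishes at $\mbx=\mby=0$. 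Since $\Psi$ is then a composition of analytic maps and a polynomial in $\lambda$, it is analytic on a neighborhood of $0$ in $(\mbx,\mby,\lambda)$.

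\textbf{Main obstacle.} The delicate point is Step 2: we need locally uniform convergence, not just pointwise convergence under $\rho<2\pi$, and we must handle the $t^{-1}$ singularity of $C_m$ at $t_i=0$ or $t_i=1$. If the weights alone did not cure this, we would fall back on the closed form $(1-e^{-\ad_w})/\ad_w$ composed with the inverse of the analogous analytic operator. The inverse exists and is analytic near $w=0$ because that operator equals the identity at $w=0$.
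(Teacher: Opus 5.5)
Your proposal is correct and takes the same route as the paper. The paper proves the proposition in one line, by noting that $\Psi$ is a composition of analytic maps ($\exp$, $\log$, their inverse differentials, sums and scalar multiples). You make that composition explicit via $\BCH$ and the $\ad$-power series, and you fill in two points the paper leaves implicit: locally uniform convergence of the $\ad$-series, and the $1/t$ singularity at $t_i\in\{0,1\}$, which the weights remove. On that last point, the uniform-in-$s$ bound you state holds for $s^2C_m(s)$, i.e.\ the paper's $\widetilde C_m(s)$, not for $C_m(s)$ itself.
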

\begin{proof}
    This follows from the fact that $\Psi$ is a composition of analytic mappings: the Lie exponential and logarithm, their inverse derivatives, addition of vectors, and multiplication by scalars.
\end{proof}

% Since $T_eG$ is
% finite-dimensional, the Lie bracket is a continuous bilinear map.
% Hence, there exists a constant $C>0$ such that
% $$
% \|[x,y]\| \le C\|x\|\|y\|
% $$
% for all $x,y\in T_eG$. Consequently,
% $$
% \|[x,y]\|
% \le \frac{C}{4}(\|x\|+\|y\|)^2,
% $$
% and thus
% $$
% \|[x,y]\|
% = \mathcal O\bigl((\|x\|+\|y\|)^2\bigr)
% \quad \text{as } (\mbx,\mby)\to(0,0).
% $$

We will make use of the Baker-Campbell-Hausdorff (BCH).  
\begin{theorem}[Series version of the BCH formula \cite{duistermaat2012lie}]
    Let $x, y \in T_eG$. If they are small enough, then the logarithm of the product $\exp(x)\exp(y)$ is given by an absolutely convergent series of nested Lie brackets of $x$ and $y$. Its expansion up to degree two reads:
    \begin{align}
        \BCH(\mbx,\mby) &:= \log(\exp(x)\exp(y)) \nonumber \\
        &= x + y + \frac{1}{2} [x, y]  + O \bigl( (\| x \| + \| y \|)^3 \bigr). \label{eq:BCH}
    \end{align}
\end{theorem}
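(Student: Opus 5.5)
We would prove the statement by deriving an ordinary differential equation for $Z(s):=\log\bigl(\exp(x)\exp(sy)\bigr)$, $s\in[0,1]$, using only the differential of the logarithm~\eqref{eq:derivative_log}. First, we choose $x,y$ so small that $\exp(x)\exp(sy)$ stays in a neighborhood of $e$ on which $\log$ is an analytic inverse of $\exp$. We also require $\rho(\ad_{Z(s)})<2\pi$ for all $s\in[0,1]$. Both are possible by continuity, since $Z(s)\to 0$ as $(x,y)\to(0,0)$ uniformly in $s$. With $Z(0)=x$ and
\[
\tfrac{\dd}{\dd s}\bigl(\exp(x)\exp(sy)\bigr)=\dd_e\Lt_{\exp(x)\exp(sy)}(y),
\]
the chain rule together with~\eqref{eq:derivative_log} gives
\[
Z'(s)=\sum_{n=0}^\infty \frac{(-1)^nB_n}{n!}\,\ad_{Z(s)}^n(y)
= y+\tfrac12[Z(s),y]+\sum_{n\ge 2}\frac{(-1)^nB_n}{n!}\,\ad_{Z(s)}^n(y),
\]
using $B_0=1$ and $B_1=-\tfrac12$.

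Next comes an a priori size estimate. Since $T_eG$ is finite dimensional, there is a constant $C$ with $\|[a,b]\|\le C\|a\|\|b\|$. Hence
\[
\|\ad_{Z}^n(y)\|\le C^n\|Z\|^n\|y\|.
\]
The series $\sum_n |B_n| r^n/n!$ converges for $r<2\pi$, so $\|Z'(s)\|\le K\|y\|$ on the region where $C\|Z(s)\|\le r_0<2\pi$, for some constant $K$. A bootstrap (continuity) argument then shows $\|Z(s)-x\|\le K\|y\|$ for all $s\in[0,1]$, and therefore $\|Z(s)\|\le c\,(\|x\|+\|y\|)$. Substituting $Z(s)=x+O(\|y\|)$ into the right-hand side, the bracket term is $\tfrac12[x,y]+O\bigl(\|y\|^2(\|x\|+\|y\|)\bigr)$, where we used $\|y\|\le\|x\|+\|y\|$. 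The tail $n\ge2$ is $O\bigl((\|x\|+\|y\|)^2\|y\|\bigr)$. Integrating from $0$ to $1$ yields
\[
Z(1)=x+y+\tfrac12[x,y]+O\bigl((\|x\|+\|y\|)^3\bigr),
\]
which is~\eqref{eq:BCH}.

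It remains to show that $\log(\exp(x)\exp(y))$ is an absolutely convergent series of nested brackets. We would solve the ODE above by Picard iteration starting from $Z_0\equiv x$. Each iterate is a polynomial-in-$s$ combination of nested brackets of $x$ and $y$, so regrouping by total degree gives a formal series $\sum_k Z_k(x,y)$ with $Z_k$ homogeneous of degree $k$ in brackets. Replacing every bracket norm by $C\|\cdot\|\|\cdot\|$ and every $B_n$ by $|B_n|$ produces a scalar majorant ODE, $\zeta'=\|y\|\,\phi(C\zeta)$ with $\zeta(0)=\|x\|$ and $\phi(r)=\sum_n|B_n|r^n/n!$. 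This majorant ODE has an analytic solution for small $\|x\|,\|y\|$. Its Taylor coefficients dominate $\sum_k\|Z_k\|$, which gives absolute convergence; uniqueness for the ODE then identifies the sum with $Z(1)$.

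We expect the main obstacle to be this majorant step: making rigorous that the regrouped Picard series is dominated term by term by the scalar majorant, with care about the radius $2\pi$ of $\phi$. Should this prove too delicate, we would fall back on the citation~\cite{duistermaat2012lie} for convergence. The degree-two expansion, which is all that is used later, rests only on the elementary ODE estimate above.
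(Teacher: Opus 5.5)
Your derivation of the ODE for $Z(s)=\log(\exp(x)\exp(sy))$ is correct; with the paper's convention $B_1=-\tfrac12$ the linear coefficient is indeed $+\tfrac12$. The tail estimate $O\bigl((\|x\|+\|y\|)^2\|y\|\bigr)$ is also fine.

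\textbf{The gap is in the bracket term.} From $Z(s)=x+O(\|y\|)$ alone you only get
\[
\Bigl\|\tfrac12\int_0^1[Z(s)-x,y]\,\dd s\Bigr\|\le \tfrac12 CK\|y\|^2 .
\]
This is quadratic, not cubic. For $\|x\|\ll\|y\|$, a bound of order $\|y\|^2$ is not $O\bigl((\|x\|+\|y\|)^3\bigr)$, and the inequality $\|y\|\le\|x\|+\|y\|$ cannot supply the missing factor. As written, your ``elementary'' part therefore proves only $x+y+\tfrac12[x,y]+O\bigl((\|x\|+\|y\|)^2\bigr)$, which is weaker than \eqref{eq:BCH}.

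The missing idea is a second-order refinement of $Z$ combined with antisymmetry of the bracket. From
\[
\|Z'(s)-y\|\le K'\|Z(s)\|\,\|y\|\le K'c(\|x\|+\|y\|)\|y\|
\]
you get $Z(s)=x+sy+O\bigl((\|x\|+\|y\|)\|y\|\bigr)$. Since $[sy,y]=0$, this gives
\[
[Z(s)-x,y]=[Z(s)-x-sy,y]=O\bigl((\|x\|+\|y\|)\|y\|^2\bigr),
\]
and the cubic bound follows.

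There is also an alternative fix: once your majorant step is in place, the cubic remainder comes for free. The degree-two component is $Z_2(1)=\tfrac12\int_0^1[x+sy,y]\,\dd s=\tfrac12[x,y]$. The sum of the components of degree $\ge 3$ is dominated by the corresponding tail of the scalar majorant, and by homogeneity and nonnegativity of its coefficients that tail is $O\bigl((\|x\|+\|y\|)^3\bigr)$.

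\textbf{Comparison with the paper.} The paper gives no proof here; it simply cites Duistermaat--Kolk. Your ODE/majorant argument is a standard self-contained alternative and is sound in outline. The domination step is cleanest if you run the induction on the triangular recursion for the homogeneous components $Z_k$, where $Z_k'$ depends only on $Z_1,\dots,Z_{k-1}$, rather than on the Picard iterates. Induction then gives $\|Z_k(s)\|\le\zeta_k(s)$ directly, and you never need to rearrange series whose convergence is not yet known.

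Your assessment of what the paper uses later is off. The paper never uses the $\tfrac12[x,y]$ term; it only needs the first-order expansion. What it does rely on is the representation of $\BCH$ as an absolutely convergent series of nested brackets, in Remark~\ref{rem:remainder} and Lemma~\ref{lem:dR}. So the convergence part is the load-bearing one, and falling back on the citation there means relying on exactly what the paper itself relies on.
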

Our conditions on $V$ ensure that the BCH formula is well-defined on $V \times V$~\cite[Section 1.7]{duistermaat2012lie}. 

The update map $\Psi$ is nonlinear because the Cartan--Schouten geodesics involve the group exponential and logarithm. Nevertheless, close to the identity, the map can be understood as a perturbation of its linearization. The purpose of the following lemma is to make this observation precise. In particular, we separate the linear contribution from the higher-order terms arising from the noncommutativity of the Lie group.
\begin{lemma} \label{lem:asymptot}
     The update map $\Psi$ is given on $W_{\mbx} \times W_{\mby} \times [0, \lambda_0]$ by
     \begin{align*}
        \Psi(\mbx, \mby, \lambda) = \begin{pmatrix}
                    x_0 + \lambda \sum_{i=1}^N (1-t_i) \bigl( y_i - (1 - t_i) x_0 - t_i x_1 \bigr) \\
                    x_1 + \lambda \sum_{i=1}^N t_i \bigl( y_i - (1 - t_i) x_0 - t_i x_1 \bigr)
                \end{pmatrix} + \widetilde{\mathcal R}(\mbx,\mby;\lambda),
    \end{align*}
    where
    $$\left\| \widetilde{\mathcal R}(\mbx,\mby;\lambda) \right\| = \mathcal{O} \Bigl( \Bigl(\|x_0\| + \| x_1\| + \sum_{i=1}^N \| y_i\| \Bigr)^2 \Bigr).$$
\end{lemma}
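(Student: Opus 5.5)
The plan is to compute every ingredient of $\Psi$ in left-trivialized coordinates and keep only first-order terms in $\delta := \|x_0\|+\|x_1\|+\sum_i\|y_i\|$. The bracket is bilinear and continuous, and all series (BCH, Lemma~\ref{lem:diff_2_gam}, Corollary~\ref{cor:diff_1_gam}) converge absolutely on $V$. So every term containing a Lie bracket, or a power $\ad^m$ with $m\ge1$ applied to an $\mathcal O(\delta)$ vector, is $\mathcal O(\delta^2)$.

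To make these constants uniform I will combine this with Proposition~\ref{prop:analytic}. Since $\Psi(0,0;\lambda)=0$ and $\Psi$ is analytic (in particular $C^2$) on a neighborhood of $\{0\}\times\{0\}\times[0,\lambda_0]$, Taylor's theorem together with compactness of $[0,\lambda_0]$ gives $\Psi = D_{(\mbx,\mby)}\Psi(0,0;\lambda)[\mbx,\mby] + \mathcal O(\delta^2)$ uniformly in $\lambda$. It then suffices to identify the linear part, and the formal first-order computation below does exactly that.

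\emph{Step 1 (geodesic points and residuals).} With $g_j=\exp(x_j)$, BCH~\eqref{eq:BCH} gives $\log(g_0^{-1}g_1)=\BCH(-x_0,x_1)=x_1-x_0+\mathcal O(\delta^2)$. Hence
$$\log b_i=\BCH\bigl(x_0,t_i\log(g_0^{-1}g_1)\bigr)=(1-t_i)x_0+t_ix_1+\mathcal O(\delta^2).$$
The left-trivialized residual is then
$$\bar u_i:=\dd_{b_i}\Lt_{b_i^{-1}}\Log_{b_i}(f_i)=\log(b_i^{-1}f_i)=\BCH(-\log b_i,y_i)=y_i-(1-t_i)x_0-t_ix_1+\mathcal O(\delta^2),$$
and in particular $\bar u_i=\mathcal O(\delta)$.

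\emph{Step 2 (inverse endpoint derivatives).} Apply Lemma~\ref{lem:diff_2_gam} with $g=g_0$, $f=g_1$, and multiply by $t_i^2$. The weighted coefficients $t_i^2C_m(t_i)=(-1)^m\sum_k t_i^{k+1}B_k/((m-k+1)!k!)$ are polynomials in $t_i$ and hence bounded on $[0,1]$. This removes the apparent singularity at $t_i=0$; the same argument applied through Corollary~\ref{cor:diff_1_gam} handles $t_i=1$ in the first component. The $m=0$ term is $t_i\,\dd_e\Lt_{g_1}(\bar u_i)$. The terms with $m\ge1$ contain $\ad^m_{\log(g_0^{-1}g_1)}(\bar u_i)$ and are therefore $\mathcal O(\delta^{m+1})$. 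Since $C_m$ inherits the decay of the convergent series in~\eqref{eq:derivative_exp} and~\eqref{eq:derivative_log}, their sum is $\mathcal O(\delta^2)$. Summing over $i$ gives $\mbv=\bigl(\dd_e\Lt_{g_0}w_0,\dd_e\Lt_{g_1}w_1\bigr)$ with
$$w_0=\sum_i(1-t_i)\bar u_i+\mathcal O(\delta^2),\qquad w_1=\sum_i t_i\bar u_i+\mathcal O(\delta^2).$$

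\emph{Step 3 (exponential step).} By~\eqref{eq:EXP}, $\Exp_{g_j}(\lambda\,\dd_e\Lt_{g_j}w_j)=g_j\exp(\lambda w_j)$. Taking the logarithm gives $\BCH(x_j,\lambda w_j)=x_j+\lambda w_j+\mathcal O(\delta^2)$, because both arguments are $\mathcal O(\delta)$ and $\lambda\le\lambda_0$. Substituting the expression for $\bar u_i$ from Step 1 yields exactly the displayed linear part, with all leftovers collected into $\widetilde{\mathcal R}$.

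I expect Step 2 to be the main obstacle. The difficulty lies in the bookkeeping that shows the tail $\sum_{m\ge1}$ is genuinely $\mathcal O(\delta^2)$ uniformly, and in the boundary cases $t_i\in\{0,1\}$. I plan to sidestep both by the analyticity–Taylor argument above: once the series are used only to identify the first derivative at $0$, uniformity of the remainder follows automatically from smoothness.
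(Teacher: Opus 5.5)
Your proposal is correct and follows essentially the paper's route: the same BCH expansions of $\log b_i$ and $\log(b_i^{-1}f_i)$, the weighted series from Lemma~\ref{lem:diff_2_gam} and Corollary~\ref{cor:diff_1_gam}, and a final BCH step for $\log(\exp(x_j)\exp(\lambda w_j))$. In those series your $t_i^2C_m(t_i)$ is exactly the paper's $\widetilde C_m(t_i)$, the $m=0$ term gives the linear contribution, and all $m\ge1$ terms are brackets. Your added Taylor/compactness argument via Proposition~\ref{prop:analytic} is a worthwhile refinement: it makes explicit that the $\mathcal O(\delta^2)$ remainder is uniform in $\lambda\in[0,\lambda_0]$, which the paper leaves implicit.
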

\begin{proof}
Set 
\begin{equation*} \label{eq:tilde_C}
          \widetilde C_m(s) := (-1)^{m} \sum_{k=0}^m \frac{s^{k+1} B_k}{(m-k+1)!k!}
\end{equation*}
and define the vectors
\begin{equation*}
    z_0 := \sum_{i=1}^N \sum_{m=0}^\infty \widetilde C_m(1-t_i) \ad_{\log(\exp(-x_1)\exp(x_0))}^{m} \bigl(\log(b_i^{-1}\exp(y_i))\bigr)
\end{equation*}
and
\begin{equation*}
    z_1 := \sum_{i=1}^N \sum_{m=0}^\infty \widetilde C_m(t_i) \ad_{\log(\exp(-x_0)\exp(x_1))}^{m} \bigl(\log(b_i^{-1}\exp(y_i))\bigr).
\end{equation*}
Combining Equation~\eqref{eq:v} with Lemma~\ref{lem:diff_2_gam} and Corollary~\ref{cor:diff_1_gam} yields
\begin{align*}
    \Psi &(\mbx, \mby, \lambda) = \begin{pmatrix}  
                \log \bigl( \exp(x_0) \exp( \lambda z_0) \bigr) \\
                \log \bigl( \exp(x_1) \exp( \lambda z_1 ) \bigr)
            \end{pmatrix}.
\end{align*}
on $W_{\mbx} \times W_{\mby} \times [0, \lambda_0]$.

The BHC formula \eqref{eq:BCH} then yields for the entries  of $\Psi$
\begin{equation*}
    \Psi_0(\mbx, \mby, \lambda) = x_0 + \lambda z_0 + \mathcal O \bigl( (\| x_0 \| + \lambda \| z_0 \|)^2 \bigr), \label{eq:psi_1}
\end{equation*}
and
\begin{equation*}
    \Psi_1(\mbx, \mby, \lambda) = x_1 + \lambda z_1 + \mathcal O \bigl( (\| x_1 \| + \lambda \| z_1 \|)^2 \bigr). \label{eq:psi_2} 
\end{equation*}

Applying the BCH formula to $z_0$ and $z_1$ gives
\begin{align*} 
    z_0 &= \sum_{i=1}^N \sum_{m=0}^\infty \widetilde C_m(1-t_i) \; \ad_{\BCH(-x_1, x_0)}^m \Bigl(\BCH \bigl(\log(b_i^{-1}), y_i \bigr) \Bigr)
\end{align*}
and
\begin{align*}
    z_1 &= \sum_{i=1}^N \sum_{m=0}^\infty \widetilde C_m(t_i) \; \ad_{\BCH(-x_0, x_1)}^m \Bigl(\BCH \bigl(\log(b_i^{-1}), y_i \bigr) \Bigr).
\end{align*}

Note that in both equations all summands with $m \ge 1$ are scalar multiples of Lie brackets involving $x_0,x_1,y_1,\dots,y_n$, \textit{with no unbracketed terms}. Therefore, and since \eqref{eq:tilde_C} gives $\widetilde C_0(s) = s$, it follows that
\begin{equation} \label{eq:z_0_asy}
    z_0 = \sum_{i=1}^N (1-t_i) \;  \BCH \bigl(\log(b_i^{-1}), y_i \bigr) +  \mathcal{O} \bigl( (\|x_0\| + \| x_1\| + \sum_{i=1}^N \| y_i\|)^2 \bigr)
\end{equation}
and 
\begin{equation} \label{eq:z_1_asy}
    z_1 = \sum_{i=1}^N t_i \; \BCH \bigl(\log(b_i^{-1}), y_i \bigr) + \mathcal{O} \bigl( (\|x_0\| + \| x_1\| + \sum_{i=1}^N \| y_i\|)^2 \bigr).
\end{equation}

It remains to compute $\BCH (\log(b_i^{-1}), y_i )$ to linear order. First,
\begin{align*}
    \log(b_i^{-1}) &= \log \Biggl( \biggl(\exp(x_0) \exp \Bigl( t_i \log \bigl(\exp(-x_0)\exp(x_1) \bigr) \Bigr) \biggr)^{-1} \Biggr)\\
    &= \log \biggl( \exp \Bigl( - t_i \log \bigl(\exp(-x_0)\exp(x_1) \bigr) \Bigr) \exp(-x_0) \biggr)\\
    &= \BCH \Bigl( - t_i \log \bigl(\exp(-x_0)\exp(x_1) \bigr), -x_0 \Bigr) \\
    &= \BCH \Bigl( - t_i \BCH \bigl(-x_0, x_1 \bigr), -x_0 \Bigr).
\end{align*}
From
\begin{align*}
    \BCH \bigl(-x_0, x_1 \bigr) =  x_1 - x_0 + \mathcal{O} \bigl( (\|x_0\| + \| x_1\|)^2 \bigr),
\end{align*}
thus follows
\begin{align*}
    \log(b_i^{-1}) &=  -t_i( x_1 - x_0) - x_0 + \mathcal{O} \bigl( (\|x_0\| + \| x_1\|)^2 \bigr) \nonumber \\
    &= -(1 - t_i) x_0 -t_i x_1 + \mathcal{O} \bigl( (\|x_0\| + \| x_1\|)^2 \bigr). \label{eq:logbi}
\end{align*}
Hence, we get
\begin{align*}
    \BCH \bigl( \log(b_i^{-1}), y_i \bigr) = y_i - (1 - t_i) x_0 -t_i x_1 + \mathcal{O} \bigl( (\|x_0\| + \| x_1\| + \| y_i\|)^2 \bigr).
\end{align*}
Inserting this into \eqref{eq:z_0_asy} and \eqref{eq:z_1_asy} gives
\begin{equation*}
    z_0 = \sum_{i=1}^N (1-t_i) \bigl( y_i - (1 - t_i) x_0 - t_i x_1 \bigr) + \mathcal{O} \bigl( (\|x_0\| + \| x_1\| + \sum_{i=1}^N \| y_i\|)^2 \bigr)
\end{equation*}
and
\begin{equation*}
    z_1 = \sum_{i=1}^N t_i \bigl( y_i - (1 - t_i) x_0 - t_i x_1 \bigr) + \mathcal{O} \bigl( (\|x_0\| + \| x_1\| + \sum_{i=1}^N \| y_i\|)^2 \bigr).
\end{equation*}

Combining these with \eqref{eq:psi_1} and \eqref{eq:psi_2} finally yields
\begin{align*}
    \Psi_0(\mbx, \mby, \lambda) = x_0 &+ \lambda \sum_{i=1}^N (1-t_i) \bigl( y_i - (1 - t_i) x_0 - t_i x_1 \bigr) \\
    &+ \mathcal{O} \bigl( (\|x_0\| + \| x_1\| + \sum_{i=1}^N \| y_i\|)^2 \bigr)
\end{align*}
and 
\begin{align*}
    \Psi_1(\mbx, \mby, \lambda) = x_1 &+ \lambda \sum_{i=1}^N t_i \bigl( y_i - (1 - t_i) x_0 - t_i x_1 \bigr) \\
    &+ \mathcal{O} \bigl( (\|x_0\| + \| x_1\| + \sum_{i=1}^N \| y_i\|)^2 \bigr).
\end{align*}
\end{proof}

\begin{remark} \label{rem:remainder}
    Remember that $\ad_x^m(y)$ is a nested Lie bracket involving only $x$ and $y$. 
    Because of this and since $\BCH(\mbx,\mby)$ is a series of nested Lie brackets involving only $x$ and $y$, the quadratic remainder term is an absolutely convergent series of nested Lie brackets. In particular, each summand is a scalar multiple of a non-trivial nested bracket that contains only $x_0, x_1, y_1, \dots, y_N$, with no unbracketed linear terms.  
\end{remark}

\subsection{Vectorization}

Before we continue, we must pick an inner product/norm on $(T_eG)^k$, $k\ge2 $: We use the euclidean inner product, that is, $$\langle (z_1, \dots, z_k), (\widetilde z_1, \dots, \widetilde z_k) \rangle_2 := \sum_{i=1}^k \langle z_i, \widetilde{z}_i \rangle$$ for $z_1, \dots z_k, \widetilde{z}_1,\dots, \widetilde{z}_k \in T_eG$. The induced $2$-norm is 
$$\| (z_1, \dots, z_k) \|_2 := \sqrt{\|z_1\|^2 + \cdots + \|z_k\|^2}.$$ 
For a linear operator $A$ on $(T_eG)^k$, we denote the induced operator norm by $\|A\|_2$.

To prepare for our fixed point approach, we start with reformulating the update map $\Psi$.
We define the linear map
\begin{align*}
    M: (T_eG)^2 &\to (T_eG)^N, \\
    x &\mapsto \begin{pmatrix}
            (1-t_1) x_0 + t_1 x_1 \\
            \vdots \\
            (1-t_N) x_0 + t_N x_1
        \end{pmatrix};
\end{align*}
its adjoint is 
\begin{align*}
    M^*: (T_eG)^N &\to (T_eG)^2, \\
    y &\mapsto \begin{pmatrix}
            \sum_{i=1}^N(1-t_i) y_i \\
            \sum_{i=1}^Nt_i y_i
        \end{pmatrix}.
\end{align*}

The map $M$ represents the linear interpolation of the two regression endpoints at the sampling times $t_i$. Thus, $Mx$ is the vector of predicted observations in logarithmic coordinates, while
$$
    y-Mx
$$
is the corresponding vector of residuals. The adjoint $M^*$ therefore aggregates these residuals into corrections for the two endpoint variables. This makes $M^*M$ the natural linear operator governing the behavior of the fixed point iteration near the identity.

We write $\Psi_{\mby}(\mbx, \lambda)$ in the rest of this work, making it more explicit that the data is seen as fixed.
The following lemma provides a useful reformulation of $\Psi_\mby$.
\begin{lemma} \label{lem:Psi_alt}
    With 
    $$a := \sum_{i=1}^N (1 - t_i)^2, \qquad b := \sum_{i=1}^N t_i(1 - t_i), \qquad \textnormal{and}  \qquad c := \sum_{i=1}^N t_i^2,$$
    let \begin{align*}
            A_\lambda: (T_eG)^2 &\to (T_eG)^2 \\
            x &\mapsto \begin{pmatrix}
        \bigl( 1 - \lambda a \bigr) x_0 - \lambda b x_1 \\
        - \lambda b x_0 +  \bigl( 1 - \lambda c \bigr) x_1
    \end{pmatrix}.
    \end{align*} 
    There exist open neighborhoods $W_{\mbx}$ and $W_{\mby}$ of 0 and $\lambda_0 >0$ such that $\Psi$ is given on $W_{\mbx} \times W_{\mby} \times [0, \lambda_0]$ by
    \begin{align*} 
        \Psi_{\mby}(\mbx, \lambda) = A_\lambda \mbx + \lambda M^*  y + \lambda \mathcal R(\mbx,\mby;\lambda),
    \end{align*}
    where 
    $$ \| R(\mbx,\mby;\lambda) \| = \mathcal{O}\bigl((\|x_0\| + \|x_1\| + \sum_{i=1}^N \|y_i\|)^2\bigr).$$
\end{lemma}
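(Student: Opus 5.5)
The statement has two parts. The first is a purely algebraic identification of the linear part of Lemma~\ref{lem:asymptot} with $A_\lambda \mbx + \lambda M^* \mby$. The second is the sharper claim that the quadratic remainder $\widetilde{\mathcal R}$ from Lemma~\ref{lem:asymptot} carries a factor $\lambda$, i.e.\ $\widetilde{\mathcal R}=\lambda\mathcal R$ with $\mathcal R$ still quadratically small. I take the same neighborhoods $W_{\mbx}$, $W_{\mby}$ and $\lambda_0$ as in the definition of $\Psi$, shrinking them if necessary.

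For the linear part, I expand the first component of Lemma~\ref{lem:asymptot}:
\begin{equation*}
\sum_i (1-t_i)\bigl(y_i-(1-t_i)x_0-t_ix_1\bigr)=\sum_i(1-t_i)y_i - a x_0 - b x_1 .
\end{equation*}
Likewise the second component gives $\sum_i t_iy_i - b x_0 - c x_1$. Comparing with the definitions of $A_\lambda$ and $M^*$ yields exactly $A_\lambda\mbx+\lambda M^*\mby$. This step is routine.

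The main point is the factor $\lambda$ in the remainder. Here I go back to the representation used in the proof of Lemma~\ref{lem:asymptot}, namely $\Psi_j=\BCH(x_j,\lambda z_j)$ for $j=0,1$, and split the remainder into two contributions.

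First, the outer BCH remainder. Since $\BCH(x,0)=x$ exactly, every term of the BCH series beyond $x+y$ is a nested bracket containing $y$ at least once. This is Remark~\ref{rem:remainder} applied to the outer BCH. Hence, with $y=\lambda z_j$, each such term is $\lambda^k$ times a bracket with $k\ge1$. So $\BCH(x_j,\lambda z_j)-x_j-\lambda z_j=\lambda\, S_j(x_j,z_j;\lambda)$, where $S_j$ is a convergent series of brackets each involving $x_j$ and $z_j$. Using $\|[u,w]\|\le C\|u\|\|w\|$ and the absolute convergence of the series on $V\times V$, I would bound
\begin{equation*}
\|S_j\|\le K\,\|x_j\|\,\|z_j\|\le K'\Bigl(\|x_0\|+\|x_1\|+\sum_i\|y_i\|\Bigr)^2 ,
\end{equation*}
with constants uniform in $\lambda\in[0,\lambda_0]$. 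The second inequality uses that $z_j$ is linearly bounded, $\|z_j\|=\mathcal O(\|x_0\|+\|x_1\|+\sum_i\|y_i\|)$, by \eqref{eq:z_0_asy} and \eqref{eq:z_1_asy}.

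Second, the inner remainder. Writing $z_j=\ell_j+r_j$, where $\ell_j$ is the linear expression obtained in the proof of Lemma~\ref{lem:asymptot} and $r_j=\mathcal O((\ldots)^2)$, the term $\lambda z_j$ contributes $\lambda r_j$. This already has the required form.

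Setting $\mathcal R:=(S_0+r_0,\,S_1+r_1)$ then completes the proof. The obstacle I expect is ensuring the bound on $S_j$ is uniform in $\lambda$, and not merely an $\mathcal O$-statement at fixed $\lambda$. I would handle this by factoring $\lambda$ out of each term of the BCH series, which contains $\lambda^k$ with $k\ge1$. One then dominates the resulting series by its value at $\lambda=\lambda_0$, using $\lambda^{k-1}\le\lambda_0^{k-1}$ and absolute convergence on a compact sub-neighborhood.
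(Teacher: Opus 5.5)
Your proof is correct, but the key step (extracting the factor $\lambda$ from the remainder) uses a different mechanism from the paper's.

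\textbf{The paper's route.} The paper does not return to the BCH series. It takes the remainder $\widetilde{\mathcal R}$ from Lemma~\ref{lem:asymptot} and uses analyticity of $\Psi$ to expand it as a power series in $\lambda$. It then notes that $\Psi_{\mby}(\mbx,0)=\mbx$ kills the $\lambda^0$ coefficient, so $\widetilde{\mathcal R}=\lambda\mathcal R$ with $\mathcal R$ analytic. The linear part is identified through $\mbx+\lambda M^*(\mby-M\mbx)=(I-\lambda M^*M)\mbx+\lambda M^*\mby$ and a computation of $M^*M$, which is equivalent to your direct expansion. This route is shorter and needs only $\Psi_{\mby}(\cdot,0)=\mathrm{id}$. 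However, it does not spell out why $\widetilde{\mathcal R}/\lambda$ keeps the quadratic bound uniformly in $\lambda$. For that one needs each $\lambda$-Taylor coefficient of $\widetilde{\mathcal R}$ to be of order two in $(\mbx,\mby)$.

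\textbf{Your route.} You use the bidegree structure of BCH together with the fact that $z_j$ does not depend on $\lambda$. Every BCH term beyond $x+y$ contains both arguments, so $\|\BCH(x,y)-x-y\|\le K\|x\|\,\|y\|$. This gives $\mathcal R=(S_0+r_0,S_1+r_1)$ explicitly, with $\|\mathcal R\|\le K'(\|x_0\|+\|x_1\|+\sum_i\|y_i\|)^2$ and $K'$ uniform on $[0,\lambda_0]$. That is exactly the form needed in Lemmas~\ref{lem:selfmap} and~\ref{lem:contaction}. It also shows why the coarser bound $\mathcal O((\|x_0\|+\lambda\|z_0\|)^2)$ in Lemma~\ref{lem:asymptot} could not produce the factor $\lambda$ on its own.

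\textbf{One small point.} You need each higher term to contain $x_j$ as well, to get the factor $\|x_j\|$. This follows from $\BCH(0,y)=y$, or equivalently from the vanishing of nested brackets in a single letter, not from Remark~\ref{rem:remainder}. State it explicitly.
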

\begin{proof}
Define the residual
\begin{equation*} 
    \mbr := \mby - M \mbx \in V^{N}.
\end{equation*}
With sufficiently small open neighborhoods $W_{\mbx}$ and $W_{\mby}$ of $0$, Lemma~\ref{lem:asymptot} ensures that
\begin{align} 
    \Psi_{\mby}(\mbx, \lambda) &= \mbx + \lambda M^* \mbr + \widetilde{\mathcal R}(\mbx,\mby;\lambda)  \nonumber \\
    &= (I - \lambda M^* M) \mbx + \lambda M^*  \mby + \widetilde{\mathcal R}(\mbx,\mby;\lambda), \label{eq:vector_Psi_}
\end{align}
on $W_{\mbx} \times W_{\mby} \times [0, \lambda_0]$;
here,
$$ \| \widetilde{\mathcal R}(\mbx,\mby;\lambda) \| = \mathcal{O} \Bigl( \Bigl(\|x_0\| + \| x_1\| + \sum_{i=1}^N \| y_i\| \Bigr)^2 \Bigr).$$ 
The analyticity of $\Psi$ implies that $\widetilde{\mathcal R}$ is also analytic, in particular in $\lambda$. 
Thus 
$$\widetilde{\mathcal R}(\mbx,\mby;\lambda) = \mathcal R_0(\mbx,\mby) + \mathcal R_1(\mbx,\mby)\lambda + \mathcal R_2(\mbx,\mby)\lambda^2 + \dots$$
for $\mathcal R_i(\mbx,\mby) \in (T_eG)^2$ with $i=0,1,\dots$
By definition $\Psi_{\mby}(\mbx, 0) = \mbx$ for all $\mbx \in W_{\mbx}$ and $\mby \in W_{\mby}$.
Therefore,
$\widetilde{\mathcal R}(\mbx,\mby,0) = 0$
for all $\mbx \in W_{\mbx}$ and $\mby \in W_{\mby}$. 
Consequently, $R_0(\mbx,\mby) = 0$, and thus
$$\widetilde{\mathcal R}(\mbx,\mby;\lambda) =  \lambda \mathcal{R}(\mbx,\mby;\lambda),$$
where 
$$\mathcal{R}:= \widetilde{\mathcal R}/\lambda = \mathcal R_1(\mbx,\mby) + \mathcal R_2(\mbx,\mby)\lambda + \dots$$ 
is analytic on the whole neighborhood. 
Setting \begin{equation*}
    A_\lambda := I - \lambda M ^* M,
\end{equation*}
Equation~\eqref{eq:vector_Psi_} thus becomes
\begin{align*} 
    \Psi_{\mby}(\mbx, \lambda) = A_\lambda \mbx + \lambda M^*  \mby + \lambda \mathcal R(\mbx,\mby;\lambda).
\end{align*}

Finally, we find
\begin{align*}
    M^*M \mbx = \begin{pmatrix}
        ax_0 + bx_1 \\
        bx_0 + cx_1
    \end{pmatrix},
\end{align*}
which yields
\begin{align*}
    A_\lambda \mbx = \begin{pmatrix}
        \bigl( 1 - \lambda a \bigr) x_0 - \lambda b x_1 \\
        - \lambda b x_0 +  \bigl( 1 - \lambda c \bigr) x_1
    \end{pmatrix}.
\end{align*}
\end{proof}

Lemma~\ref{lem:Psi_alt} provides the central local decomposition of the update map. The first term,
$$
    A_\lambda \mbx=(I-\lambda M^*M) \mbx,
$$
is precisely the linearization of the fixed point iteration with respect to the endpoint variables at the identity. The second term, $\lambda M^* \mby$, describes the influence of the observed data, while $\lambda\mathcal R$ contains the nonlinear contributions resulting from the geometry of the Lie group.

\section{Local Existence, Uniqueness, and Convergence}\label{sec:existence}

We now have the prerequisites needed to prove local existence and uniqueness of the estimator. Our approach, like that of Pennec and Arsigny in~\cite{PennecArsigny2013}, is to study the fixed points of the update map $\Psi$. Because of the estimator's equivariance properties, understanding this map is sufficient. The main idea is to exploit the fact that, in a sufficiently small neighborhood of the identity, the nonlinear update map can be decomposed into a linear part and a higher-order remainder. The linear part determines the dominant behavior of the iteration, while the nonlinear terms can be controlled by restricting the data to a sufficiently small neighborhood.

In particular, we will show that, for sufficiently localized data and sufficiently small stepsize $\lambda$, the map $\Psi$ is a contraction on a suitable closed ball in logarithmic coordinates. The Banach Fixed Point Theorem then guarantees the existence of a unique fixed point in this ball and convergence of the corresponding fixed point iteration. Finally, we relate this fixed point to the estimating equations of bi-invariant geodesic regression.

Using Lemma \ref{lem:asymptot}, the subsequent analysis reduces the local behavior of the nonlinear iteration to two questions. First, under which conditions is the linear operator $A_\lambda$ a contraction? Second, can the nonlinear remainder be controlled sufficiently strongly in a small neighborhood so that it does not destroy this contraction? We address the first question next.

\subsection{Linear Contraction}

\begin{lemma} \label{eq:norm_A}
    Let $t_1,\dots,t_N \in [0,1]$ not all equal. Let further $\mu_{\textnormal{min}}$ and $\mu_{\textnormal{max}}$ be the smaller and larger eigenvalue of $M^*M$, respectively. If $0 < \lambda < 2/\mu_{\textnormal{max}}$, then
    $$\left\| A_\lambda \right\|_2 = \textnormal{max} \{|1 - \lambda \mu_{\textnormal{min}}|,  |1 - \lambda \mu_{\textnormal{max}}|\} < 1,$$
    that is, $A_\lambda$ is a contraction.
\end{lemma}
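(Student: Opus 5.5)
The proof rests on $A_\lambda = I - \lambda M^*M$ being a self-adjoint operator on $(T_eG)^2$ with respect to $\langle\cdot,\cdot\rangle_2$, so its operator norm equals its spectral radius. We first reduce the spectral structure to the $2\times 2$ scalar matrix
$$
S := \begin{pmatrix} a & b \\ b & c \end{pmatrix},
$$
with $a,b,c$ as in Lemma~\ref{lem:Psi_alt}. By that lemma, $M^*M$ acts on $(x_0,x_1)$ as $S\otimes \mathrm{id}_{T_eG}$. If $e$ is an eigenvector of $S$ for the eigenvalue $\mu$, then every pair $(e_0 w, e_1 w)$ with $w\in T_eG$ is an eigenvector of $M^*M$ for $\mu$. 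Choosing an orthonormal basis of $T_eG$ for $\langle\cdot,\cdot\rangle$, we conclude that $M^*M$ has exactly the eigenvalues $\mu_{\min}\le\mu_{\max}$ of $S$, each with multiplicity $\dim G$. Hence the eigenvalues of $A_\lambda$ are $1-\lambda\mu_{\min}$ and $1-\lambda\mu_{\max}$.

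Self-adjointness is immediate, since $A_\lambda = I-\lambda M^*M$ and $M^*M$ is self-adjoint; one can also see it from the symmetry of $S$ together with the product form of $\langle\cdot,\cdot\rangle_2$. The spectral theorem then gives $\|A_\lambda\|_2=\max\{|1-\lambda\mu_{\min}|,|1-\lambda\mu_{\max}|\}$.

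For the strict bound we need $0<\mu_{\min}$. Positive semidefiniteness follows from $\langle M^*M\mbx,\mbx\rangle_2=\|M\mbx\|_2^2\ge 0$. For definiteness, suppose $M\mbx=0$, so $(1-t_i)x_0+t_ix_1=0$ for all $i$. Because the $t_i$ are not all equal, there are indices $i\ne j$ with $t_i\ne t_j$. The two equations for $i$ and $j$ form a linear system in $(x_0,x_1)$ with coefficient determinant $(1-t_i)t_j-t_i(1-t_j)=t_j-t_i\ne 0$, so $x_0=x_1=0$. Scalarly, this is the Cauchy--Schwarz statement $ac-b^2>0$.

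With $0<\mu_{\min}\le\mu_{\max}$ and $0<\lambda<2/\mu_{\max}$, both $\lambda\mu_{\min}$ and $\lambda\mu_{\max}$ lie in $(0,2)$. Therefore $|1-\lambda\mu|<1$ for both eigenvalues, and $A_\lambda$ is a contraction.

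The only point requiring care is the identification of the spectrum of $M^*M$ on the tensor product $(T_eG)^2$ with that of $S$, and the resulting norm identity. We handle this with the orthonormal basis argument above, which uses that the norm on $(T_eG)^2$ is the Euclidean product of an inner-product norm.
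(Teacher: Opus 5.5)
Your proof is correct and follows essentially the same route as the paper: you identify $M^*M$ with $\mathbf{H}\otimes\mathbf{I}_d$ (your $S$ is the paper's $\mathbf{H}$), use self-adjointness to get $\|A_\lambda\|_2=\max\{|1-\lambda\mu_{\min}|,|1-\lambda\mu_{\max}|\}$, and show $\mu_{\min}>0$ from the existence of $t_i\neq t_j$. Your injectivity argument for $M$ via the determinant $t_j-t_i\neq 0$ is a cleaner phrasing of the paper's observation that $\mathbf{h}_i$ and $\mathbf{h}_j$ are linearly independent.
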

\begin{proof}
    Let $d$ be the dimension of $G$. We choose an orthonormal basis
    $$
    \mathcal{B}=(v_1,\ldots,v_d)
    $$
    of $T_eG$,
    and use, for each $(T_eG)^k$, $k\geq 1$, the induced orthonormal basis
    $$
    \mathcal{B}^{(k)}
    =
    \left\{
    e_i^{(j)} : 1\leq i\leq d,\ 1\leq j\leq k
    \right\},
    $$
    where
    $$
    e_i^{(j)}
    = (0,\ldots,0,\underbrace{v_i}_{j\text{-th position}},0,\ldots,0).
    $$
   In the following, use the representation in terms of these bases to work with matrices and calculate the norm of $A_\lambda$ from them. 
   
For the matrix representation of $M$ and its adjoint $M^*$ we need
\begin{equation*}
    \mathbf{M} := \begin{bmatrix}
            1-t_1 & t_1 \\
            \vdots & \vdots \\
            1-t_N & t_N
        \end{bmatrix} \in \mathbb{R}^{N, 2};
\end{equation*}
and $\mathbf{M}^T$, respectively.
With $\mathbf I_d$ denoting the $d$-by-$d$ identity matrix, $M$ is represented by
\begin{equation*}
    \mathcal M := \mathbf{M} \otimes \mathbf I_d = \begin{bmatrix}
        (1 - t_1) \mathbf I_d & t_1 \mathbf I_d \\
        \vdots & \vdots \\
        (1 - t_N) \mathbf I_d & t_N \mathbf I_d
    \end{bmatrix} \in \R^{Nd, 2d},
\end{equation*}
where $\otimes$ denotes the Kronecker product. Its adjoint is given by $\mathcal M^T$.

Now, we use the facts that $(\mathbf{A} \otimes \mathbf{B})^T = \mathbf{A}^T \otimes \mathbf{B}^T$ and $(\mathbf{A} \otimes \mathbf{B}) (\mathbf{C} \otimes \mathbf{D}) = \mathbf{A} \mathbf{C} \otimes \mathbf{B}\mathbf{D}$ for all matrices $\mathbf{A}, \mathbf{B}, \mathbf{C}$ and $\mathbf{D}$ for which the matrix products $\mathbf{AC}$ and $\mathbf{BD}$ are defined. Thus,
\begin{equation*}
   \mathcal M ^T \mathcal M = (\mathbf{M}^T \otimes \mathbf I_d) (\mathbf{M} \otimes \mathbf I_d) = \mathbf{M}^T \mathbf{M} \otimes \mathbf I_d \in \R^{2d, 2d}. 
\end{equation*}
Interesting for us is
\begin{equation*}
    \mathbf{H}: = \mathbf{M}^T\mathbf{M} = \begin{bmatrix}
                    \sum_{i=1}^N (1 - t_i)^2 & \sum_{i=1}^N t_i(1 - t_i) \\
                    \sum_{i=1}^N t_i(1 - t_i) & \sum_{i=1}^N t_i^2
                \end{bmatrix} \in \R^{2, 2}.
\end{equation*}
Note that with 
\begin{equation*}
    \mathbf{h}_i := \begin{bmatrix}
        1-t_i \\
        t_i
    \end{bmatrix}, \quad i=1,\dots,N,
\end{equation*}
we have
\begin{equation*}
    \mathbf{H} = \sum_{i=1}^N \mathbf{h}_i \mathbf{h}_i^T.
\end{equation*}
Clearly, $\mathbf{H}$ is symmetric positive-semidefinite. Moreover, an eigenvalue of $\mathbf{H}$ is 0 if and only if there is a vector $\mathbf x \neq 0$ orthogonal to all $\mathbf{h}_i$. Since the latter all lie at distinct points along the line segment from $\begin{bmatrix}
    1 & 0
\end{bmatrix}^T$ to $\begin{bmatrix}
    0 & 1
\end{bmatrix}^T$, two vectors
$\mathbf{h}_i$ and $\mathbf{h}_j$ are linearly independent if and only if $t_i \ne t_j$. Consequently, if there are $i,j \in \{1,\dots,N\}$ with $t_i \ne t_j$, then $\mathbf H$ has two positive eigenvalues $0 < \mu_{\textnormal{min}} \le \mu_{\textnormal{max}}$.\footnote{Both can be computed with standard formulas; they are
\begin{equation*}
    \mu_{\textnormal{max},\textnormal{min}} = \frac{\sum_{i=1}^N (1 -2t_i + 2t_i^2)}{2} \pm \sqrt{ \frac{\Bigl(\sum_{i=1}^N (1 - 2t_i) \Bigr)^2}{4} + \Biggl(\sum_{i=1}^N t_i(1-t_i) \Biggr)^2}.
\end{equation*}}

In coordinates, the map $A_\lambda$ is given by the matrix
\begin{equation*}
    \mathbf{A}_{\lambda} = (\mathbf I_{2n} - \lambda \mathbf H \otimes \mathbf I_d).
\end{equation*}
This matrix is symmetric because its components are all symmetric. Remember that the eigenvalues of the tensor product $ \mathbf{A} \otimes \mathbf{B}$ are formed by multiplying every eigenvalue of $\mathbf{A}$ by every eigenvalue of $\mathbf{B}$. Therefore, the eigenvalues of $\mathbf{A}_\lambda$ are 
\begin{equation*}
    1 - \lambda \mu_{\textnormal{min}} \quad \textnormal{and} \quad 1 - \lambda \mu_{\textnormal{max}},
\end{equation*}
both with multiplicity $n$. 
The 2-norm of $A_\lambda$ is thus
\begin{equation*}
    \| A_\lambda \|_2 = \textnormal{max} \{|1 - \lambda \mu_{\textnormal{min}}|,  |1 - \lambda \mu_{\textnormal{max}}|\}.
\end{equation*}
Now, $A_\lambda$ is a contraction~\cite{kreyszig1991introductory} if and only if $-1 < 1-\lambda \mu_{\textnormal{max}} < 1$.
This is the case when 
$$0 < \lambda < \frac{2}{\mu_{\textnormal{max}}}.$$
\end{proof}

The previous lemma shows that $A_\lambda$ is a contraction whenever

$$
    0<\lambda<\frac{2}{\mu_{\mathrm{max}}}.
$$

For the remainder of the proof, we impose the slightly stronger restriction
$$
    0<\lambda\leq\frac{1}{\mu_{\mathrm{max}}},
$$
which has the convenient consequence that both eigenvalues of $A_\lambda$ are nonnegative. Hence,
$$
    \|A_\lambda\|_2=1-\lambda\mu_{\mathrm{min}}.
$$

This explicit expression will allow us to compare the contraction induced by the linear part with the contribution of the nonlinear remainder.

The expansion in Lemma~\ref{lem:Psi_alt} separates the behavior of the iteration into a linear part and a nonlinear remainder. The linear part is governed by the map $M^*M$. If the sampling parameters $t_i$ are not all identical, then $M^*M$ is positive definite. Consequently, as shown above, the linear operator $A_\lambda$ is a contraction for sufficiently small $\lambda>0$.

This observation alone, however, does not imply that the full nonlinear map $\Psi_\mby$ is a contraction. We must also ensure that the iterates remain in the neighborhood on which the local expansion is valid. Moreover, the contribution of the data must be small enough that the contraction of the linear part is not overwhelmed by the data term.

\subsection{Invariant Ball}
In the following, we denote the 2-norm ball of radius $R$ in $(T_eG)^2$ by
$$
B_R^2(0):=
\left\{
\mbz \in(T_eG)^2: \| \mbz \|_2 \le R
\right\}.
$$
The next lemma establishes that, for sufficiently localized data, $\Psi_\mby$ maps such a neighborhood of the origin into itself.
\begin{lemma} \label{lem:selfmap}
    Let $t_1,\dots,t_N \in[0,1]$ not all equal. Further, set
    $$
        \lambda^* := \textnormal{min}\left(\lambda_0, \frac{1}{\mu_{\textnormal{max}}} \right)  \qquad \textnormal{and} \qquad
        \alpha^* := \frac{\sqrt{N \mu_{\textnormal{max}}}}{\mu_{\textnormal{min}}},
    $$
    and let
    $$\alpha > \alpha^*.$$
    Then there exists $r_s(\alpha)>0$ such that the following holds:
    For every $r\in(0,r_s(\alpha)]$, set
    $$
    R:= \alpha r.
    $$
    Then, for every dataset
    $(y_i)_{i=1}^N$ satisfying
    $$
    \| y_i \| \le r, \qquad i=1,\dots,N,
    $$
    and every $0<\lambda\le \lambda^*$, we have
    $$\Psi_\mby(B^2_R(0)) \subseteq B^2_R(0).$$
\end{lemma}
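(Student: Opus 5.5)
We use the decomposition of Lemma~\ref{lem:Psi_alt},
$$
\Psi_\mby(\mbx,\lambda)=A_\lambda\mbx+\lambda M^*\mby+\lambda\mathcal R(\mbx,\mby;\lambda),
$$
and bound each of the three terms in $\|\cdot\|_2$ for $\mbx\in B_R^2(0)$ and $\|y_i\|\le r$. The aim is to show that the total is at most $R=\alpha r$.

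\textbf{Linear term.} Since $0<\lambda\le\lambda^*\le 1/\mu_{\max}$, the discussion after Lemma~\ref{eq:norm_A} gives
$$
\|A_\lambda\mbx\|_2\le(1-\lambda\mu_{\min})R .
$$

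\textbf{Data term.} We have $\|M^*\|_2=\|M\|_2=\sqrt{\mu_{\max}}$, because $M^*M$ is represented by $\mathbf H\otimes\mathbf I_d$. We also have $\|\mby\|_2\le\sqrt N\,r$. Hence
$$
\|\lambda M^*\mby\|_2\le\lambda\sqrt{N\mu_{\max}}\,r=\lambda\,\alpha^*\mu_{\min}\,r .
$$

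\textbf{Remainder.} The remainder $\mathcal R$ is analytic in a neighborhood of $0$, uniformly for $\lambda\in[0,\lambda_0]$. It is also quadratic in $(\mbx,\mby)$: Lemma~\ref{lem:Psi_alt} gives the estimate $\mathcal O((\|x_0\|+\|x_1\|+\sum\|y_i\|)^2)$. The step from $\widetilde{\mathcal R}=\lambda\mathcal R$ to this bound needs care, since the $\mathcal O$-bound on $\widetilde{\mathcal R}$ must transfer to $\mathcal R$ uniformly in $\lambda$. I would argue this directly. Each coefficient $\mathcal R_k$ in the $\lambda$-expansion is a series of brackets of degree at least $2$, by Remark~\ref{rem:remainder}. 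Alternatively, write $\mathcal R(\mbx,\mby;\lambda)=\int_0^1\partial_\lambda\widetilde{\mathcal R}(\mbx,\mby;s\lambda)\,ds$ and note that $\partial_\lambda\widetilde{\mathcal R}$ vanishes to second order at $(\mbx,\mby)=0$. Either way we obtain constants $K>0$ and $\rho>0$ such that
$$
\|\mathcal R(\mbx,\mby;\lambda)\|_2\le K\bigl(\|x_0\|+\|x_1\|+\textstyle\sum\|y_i\|\bigr)^2
$$
whenever $\|\mbx\|_2,\|y_i\|\le\rho$ and $\lambda\in[0,\lambda_0]$. On our ball, $\|x_0\|+\|x_1\|\le\sqrt2R$, so
$$
\|\mathcal R\|_2\le K(\sqrt2\alpha+N)^2r^2 .
$$

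\textbf{Combining.} Adding the three bounds gives
$$
\|\Psi_\mby(\mbx,\lambda)\|_2\le \alpha r-\lambda r\Bigl[(\alpha-\alpha^*)\mu_{\min}-K(\sqrt2\alpha+N)^2r\Bigr].
$$
The factor $\lambda$ multiplies both the gain $(\alpha-\alpha^*)\mu_{\min}$ and the remainder. Therefore the bracket is nonnegative for all $\lambda\in(0,\lambda^*]$ simultaneously as soon as
$$
r\le\frac{(\alpha-\alpha^*)\mu_{\min}}{K(\sqrt2\alpha+N)^2}.
$$
This is where $\alpha>\alpha^*$ is essential. We define $r_s(\alpha)$ as the minimum of this quantity and of the thresholds that keep everything inside the domain: $\alpha r\le\rho$, $r\le\rho$, $B^2_{\alpha r}(0)\subseteq W_\mbx$, and $\{\|y_i\|\le r\}\subseteq W_\mby$. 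With this choice the self-map property $\Psi_\mby(B^2_R(0))\subseteq B^2_R(0)$ follows.

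\textbf{Main obstacle.} The key difficulty is making the remainder bound explicit and uniform in $\lambda$. The lemma as stated only gives an $\mathcal O$-statement, so we must extract a concrete constant $K$ valid on a fixed neighborhood. If needed, this can be done via Cauchy estimates for the analytic function $\widetilde{\mathcal R}$ on a polydisc.
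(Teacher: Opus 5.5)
Your proposal is correct and follows the paper's proof essentially line by line. It uses the same triangle-inequality estimate on the three terms of Lemma~\ref{lem:Psi_alt}, with $\|A_\lambda\|_2=1-\lambda\mu_{\min}$, $\|M^*\|_2=\sqrt{\mu_{\max}}$ and the bound $(\sqrt2\alpha+N)r$ on the argument of the remainder, and it arrives at the same threshold $r_s(\alpha)$, since your $(\alpha-\alpha^*)\mu_{\min}$ is exactly the paper's margin $\delta_\alpha$. Your extra justification that the quadratic bound on $\mathcal R=\widetilde{\mathcal R}/\lambda$ holds with a constant uniform in $\lambda\in[0,\lambda_0]$ (for example via $\mathcal R=\int_0^1\partial_\lambda\widetilde{\mathcal R}(\mbx,\mby;s\lambda)\,\dd s$) fills in a point the paper only asserts with ``for some $C>0$''.
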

\begin{proof}
Lemma~\ref{lem:Psi_alt} yields 
\begin{align*}
    \| \Psi_\mby (\mbx, \lambda) \|_2  
    &= \biggl\| A_\lambda \mbx + \lambda M^*  y + \lambda \mathcal R(\mbx,\mby;\lambda) \biggr\|_2 \\   
    &\le \| A_\lambda \|_2\ \| \mbx \|_2 + \lambda \| M^* \|_2\ \| \mby \|_2 + \lambda \|\mathcal{R}(\mbx,\mby;\lambda)\|_2 \\
    &\le (1-\lambda \mu_{\textnormal{min}}) \| \mbx \|_2 + \lambda \| M^* \|_2\ \| \mby \|_2 + \lambda \|\mathcal{R}(\mbx,\mby;\lambda)\|_2.
\end{align*}
on a neighborhood $W_{\mbx} \times W_{\mby} \times [0, \lambda_0]$.
Hence, when $\lambda \le \textnormal{min}(\lambda_0, 1/\mu_{\textnormal{max}})$, we find
\begin{align*}
    \| \Psi_\mby (\mbx, \lambda) \|_2 
    &\le (1-\lambda \mu_{\textnormal{min}}) \| \mbx \|_2 + \lambda \| M^* \|_2\ \| \mby \|_2 + \lambda \|\mathcal{R}(\mbx,\mby;\lambda)\|_2.
\end{align*}

Since $\| M^* \|_2^2$ is the largest eigenvalue of $M^*M$ (see~\cite[Chapter 7]{axler2024linear}), we get $\| M^* \|_2 =\sqrt{\mu_{\textnormal{max}}}.$
Therefore,
\begin{align*}
    \| \Psi_\mby (\mbx, \lambda) \|_2  
    &\le (1-\lambda \mu_{\textnormal{min}})  \| \mbx \|_2 + \lambda  \sqrt{\mu_{\textnormal{max}}}\ \| \mby \|_2 + \lambda \| \mathcal{R}(\mbx,\mby;\lambda) \|_2.
\end{align*}

If $x \in B^2_{\alpha r}(0)$, then Cauchy-Schwartz implies $\|x_0\| + \| x_1\| \le \sqrt{2} \| \mbx \|_2 \le \sqrt{2} \alpha r$. Similarly, when $ \|y_i\| \le r$ for all $i=1,\dots,N$, we have $\sum_{i=1}^N \| y_i\| \le \sqrt{N} \| \mby \|_2 \le Nr$.
Hence, under our assumptions, we find
$$ \|x_0\| + \| x_1\| + \sum_{i=1}^N \| y_i\| \le \left(\sqrt{2} \alpha + N \right) r.$$
Consequently, for some $C > 0$,
\begin{align*}
    \| \Psi_\mby (\mbx, \lambda) \|_2  
    &\le (1-\lambda \mu_{\textnormal{min}})  \alpha r + \lambda  \sqrt{N\mu_{\textnormal{max}}} r + \lambda C (\sqrt{2} \alpha + N )^2 r^2.
\end{align*}

For $\Psi_\mby$ to map $B^2_R(0)$ into itself, we need $\| \Psi_\mby (\mbx, \lambda) \|_2 \le R = \alpha r$; this is true if,
\begin{align} \label{eq:ineq}
    (1-\lambda \mu_{\textnormal{min}})  \alpha r + \lambda  \sqrt{N\mu_{\textnormal{max}}} r + \lambda C (\sqrt{2} \alpha + N )^2 r^2 \le \alpha r.
\end{align}
When $\lambda > 0$, this is equivalent to
\begin{align*}
    \mu_{\textnormal{min}} \alpha \ge +  \sqrt{N\mu_{\textnormal{max}}} + C (\sqrt{2} \alpha + N )^2 r.
\end{align*}
So, if $\alpha > \sqrt{N \mu_{\textnormal{max}}} / \mu_{\textnormal{min}}$, then there is a positive margin
$$\delta_\alpha := \mu_{\textnormal{min}} \alpha - \sqrt{N\mu_{\textnormal{max}}}$$ that can absorb the quadratic term.
Indeed, let $\widetilde{r}>0$ be sufficiently small such that
$B^2_{\alpha r}(0) \subseteq W_{\mbx}$ and $\{(y_1,\dots,y_N) \in (T_eG)^N: \textnormal{max}(\| y_i \|) \le r \}  \subseteq W_{\mby}$
hold for all $r\le \widetilde{r}$. Choose $$r_s(\alpha) := \textnormal{min} \left ( \frac{\delta_\alpha}{C(\sqrt{2} \alpha + N )^2},\widetilde{r} \right).$$
Then, \eqref{eq:ineq} is true for every $r\in(0,r_s(\alpha)]$, and $\Psi_\mby$ maps $B^2_R(0)$ into itself.

\end{proof}

The self-map property is the first essential ingredient in the fixed point argument. It guarantees that, once an iterate has entered the chosen ball, all subsequent iterates remain in the same ball. Consequently, the local expansion from Lemma~\ref{lem:Psi_alt} remains applicable throughout the iteration.

\begin{remark}
The condition $\alpha>\alpha^*$ has a natural interpretation. The radius $\alpha r$ of the ball for the unknown endpoints must be sufficiently large compared with the size $r$ of the data. At leading order, the contraction of the endpoint variables contributes the term $\mu_{\mathrm{min}}\alpha r$, whereas the data contributes a term of size $\sqrt{N\mu_{\mathrm{max}}},r$. Choosing
$$\alpha>\frac{\sqrt{N\mu_{\mathrm{max}}}}{\mu_{\mathrm{min}}}
$$
ensures that the linear contraction provides a positive margin which can absorb the quadratic remainder when $r$ is sufficiently small.
\end{remark}

\subsection{Contraction of the Nonlinear Map}

It remains to show that the nonlinear remainder is sufficiently small in the Lipschitz sense. This is where the fact that $\mathcal R$ has no terms of degree less than two becomes important.
The following lemma uses this fact to show that the remainder itself and its derivative with respect to the endpoint variables vanish at the origin. Furthermore, it uses this to bound derivative in a neighborhood of 0.

\begin{lemma} \label{lem:dR}
    The remainder $\mathcal R$ fulfills
    $$
    \mathcal R(0,0,\lambda)=0
    \qquad \textnormal{and} \qquad
    \partial_0\mathcal R(0,0,\lambda)=0
    $$
    for all $\lambda \in [0,\lambda_0]$. Furthermore, for $C > 0$ independent of $\lambda$ there are compact neighborhoods $\mathcal C_\mbx$ and $\mathcal C_\mby$ of 0, such that
    $$
    \left\| \partial_0\mathcal R(\mbx,\mby;\lambda) 
    \right\|_2
    \le C (\|\mbx\|_2 + \|\mby\|_2)
    $$
    on $\mathcal C_\mbx \times \mathcal C_\mby \times [0, \lambda_0]$.
\end{lemma}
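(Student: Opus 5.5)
The plan is to use the structure of the remainder established in Lemma~\ref{lem:asymptot}, Remark~\ref{rem:remainder} and Lemma~\ref{lem:Psi_alt}: $\mathcal R$ is analytic and is a convergent series of nested brackets of degree at least two in $(x_0,x_1,y_1,\dots,y_N)$. I will also use the quantitative statement that $\|\mathcal R(\mbx,\mby;\lambda)\|_2=\mathcal O\bigl((\|\mbx\|_2+\|\mby\|_2)^2\bigr)$, where all norms on finite-dimensional spaces are equivalent.

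For the first claim, insert $\mbx=0$, $\mby=0$. Every summand of $\widetilde{\mathcal R}$ is a bracket of these arguments, so it vanishes. Hence $\widetilde{\mathcal R}(0,0;\lambda)=0$ for all $\lambda$. Writing $\widetilde{\mathcal R}=\lambda\mathcal R$ with $\mathcal R=\mathcal R_1+\mathcal R_2\lambda+\dots$, each coefficient $\mathcal R_k(0,0)$ vanishes. Therefore $\mathcal R(0,0;\lambda)=0$, including at $\lambda=0$, where $\mathcal R(0,0;0)=\mathcal R_1(0,0)=0$.

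For the second claim, I would argue by contradiction using the quadratic bound. Suppose $L:=\partial_0\mathcal R(0,0;\lambda)\neq 0$. Take $\mbx=s\mathbf e$ with $L\mathbf e\ne0$ and $\mby=0$. Then $\mathcal R(s\mathbf e,0;\lambda)=sL\mathbf e+o(s)$. This contradicts $\|\mathcal R\|=\mathcal O(s^2)$.

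There is a subtlety: the $\mathcal O$-constant must be uniform in $\lambda$, and it must hold for $\mathcal R=\widetilde{\mathcal R}/\lambda$, not just for $\widetilde{\mathcal R}$. I would handle this through the Taylor expansion in $(\mbx,\mby)$. Since $\widetilde{\mathcal R}$ has no terms of degree $0$ or $1$ in $(\mbx,\mby)$ for every fixed $\lambda$, the same holds for each $\mathcal R_k$. So the degree-$0$ and degree-$1$ Taylor coefficients of $\mathcal R(\cdot,\cdot;\lambda)$ vanish identically in $\lambda$. This gives both $\mathcal R(0,0;\lambda)=0$ and $\partial_0\mathcal R(0,0;\lambda)=0$ directly, and it makes the contradiction argument unnecessary.

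For the derivative bound, I would choose compact neighborhoods: closed balls $\mathcal C_\mbx\subset W_\mbx$ and $\mathcal C_\mby\subset W_\mby$, small enough that $\mathcal R$ is analytic, hence $C^2$, on an open set containing $\mathcal C_\mbx\times\mathcal C_\mby\times[0,\lambda_0]$. If necessary, I would shrink $\lambda_0$ slightly so that the interval becomes compact inside the analyticity domain. Let $C:=\sup\|D_{(\mbx,\mby)}\partial_0\mathcal R\|$ over this compact set; it is finite by continuity. Because the balls are convex, the mean value inequality along the segment from $(0,0)$ to $(\mbx,\mby)$ gives
$$\|\partial_0\mathcal R(\mbx,\mby;\lambda)-\partial_0\mathcal R(0,0;\lambda)\|_2\le C\,\|(\mbx,\mby)\|_2\le C(\|\mbx\|_2+\|\mby\|_2).$$
Combined with $\partial_0\mathcal R(0,0;\lambda)=0$, this is the claim, and $C$ is independent of $\lambda$ by construction.

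The main obstacle I expect is rigor about analyticity of $\mathcal R=\widetilde{\mathcal R}/\lambda$ jointly in $(\mbx,\mby,\lambda)$ up to $\lambda=0$ on a set whose closure is compact. Lemma~\ref{lem:Psi_alt} asserts this analyticity. To be safe, I would write $\mathcal R(\mbx,\mby;\lambda)=\int_0^1\partial_\lambda\widetilde{\mathcal R}(\mbx,\mby;s\lambda)\,\dd s$, which exhibits it as a smooth function on the whole closed domain. Then continuity of second derivatives on the compact set gives the uniform constant.
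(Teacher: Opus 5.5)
Your proposal is correct and follows essentially the paper's route. You show that the degree-$0$ and degree-$1$ terms vanish because of the bracket structure of the remainder. You then use the integral (mean-value) estimate along the segment from the origin, with a uniform bound on the second derivatives over a compact set. Your extra care in passing from $\widetilde{\mathcal R}$ to $\mathcal R=\widetilde{\mathcal R}/\lambda$, via the $\lambda$-Taylor coefficients or the integral representation, spells out a step the paper only asserts.
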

\begin{proof}
As discussed in Remark~\ref{rem:remainder}, all summands in the development of $\mathcal R$ are given by nested Lie brackets containing only $x_0,x_1, y_1,\dots,y_N$. Differentiating with respect to $(x_0,x_1)$ preserves this structure: for example, $\partial_0[x_0,x_1](u)=[u,x_1]$. Consequently, every summand in the development of $\partial_0\mathcal R$ contains at least one factor that vanishes when $x_0=x_1=y_1=\cdots=y_N=0$. Hence, setting $x_0=x_1=y_1=\cdots=y_N=0$ yields $\mathcal R(0,0,\lambda)=0$ and $\partial_0\mathcal R(0,0,\lambda)=0$ for every $\lambda \in [0, \lambda_0]$.    

It remains to show that the norm of the derivative can be bounded in terms of $x$ and $y$ in a compact local neighborhood.
Let
$$
F(\mbx,\mby;\lambda):=\partial_0\mathcal R(\mbx,\mby;\lambda).
$$
For fixed $\lambda$, consider the curve
$$
\ell(s):=(s\mbx,s\mby),\qquad s\in[0,1].
$$
Then
$$
F(\mbx,\mby;\lambda)-F(0,0;\lambda)
=
\int_0^1 \frac{\dd}{\dd s}F(\ell(s); \lambda)\,\dd s.
$$
By the chain rule,
$$
\frac{\dd}{\dd s}F(\ell(s); \lambda) = \frac{\dd}{\dd s}F(s\mbx,s\mby;\lambda)
=
\partial_{(1,2)}F(s\mbx,s\mby;\lambda)
\begin{pmatrix}
\mbx\\
\mby
\end{pmatrix}.
$$
Since
$$
F(0,0;\lambda)=0,
$$
we obtain
$$
F(\mbx,\mby;\lambda)
=
\int_0^1
\partial_{(1,2)}F(s\mbx,s\mby;\lambda)
\begin{pmatrix}
\mbx\\
\mby
\end{pmatrix}
\,\dd s.
$$
Taking norms gives
$$
\|F(\mbx,\mby;\lambda)\|_2
\le
\int_0^1
\left\|
\partial_{(1,2)}F(s\mbx,s\mby;\lambda)
\right\|_2
\left\|
\begin{pmatrix}
\mbx\\
\mby
\end{pmatrix}
\right\|_2
\,\dd s.
$$
If the compact neighborhood $\mathcal C_\mbx \times \mathcal C_\mby$ is small enough, then the analyticity of $F$ allows us to bound the norm of the derivative:
$$
\left\|
\partial_{(1,2)}F(s\mbx,s\mby;\lambda)
\right\|_2
\le C
$$
on $\mathcal C_\mbx \times \mathcal C_\mby \times [0, 1]$.
Since $F(\mbx,\mby;\lambda) = \partial_0\mathcal R(\mbx,\mby;\lambda)$, this gives
$$
    \left\| \partial_0\mathcal R(\mbx,\mby;\lambda) 
    \right\|_2 \le C (\|\mbx\|_2 + \|\mby\|_2).
$$

\end{proof}

Knowing that $\partial_0 \mathcal R(0,0, \lambda) = 0$,
the Lipschitz constant of the remainder can be made arbitrarily small by restricting the data and the iterates to a sufficiently small neighborhood. This is the missing piece to show that $\Psi$ is locally a contraction.

\begin{lemma} \label{lem:contaction}
Let $t_1,\dots,t_N \in[0,1]$ not all equal. Let further $\lambda^*$ and $\alpha^*$ be as in Lemma~\ref{lem:selfmap},
and $\alpha > \alpha^*$. Then there exist $r_c(\alpha)>0$ such that the following holds:
For every $r\in(0,r_c(\alpha)]$, set
$$
R:= \alpha r.
$$ 
Then, for every data set
$(y_i)_{i=1}^N$ satisfying
$$
\| y_i \| \le r, \qquad i=1,\dots,N,
$$
and every $0<\lambda\le\lambda^*$, the map
$$
\Psi_\mby:B^2_R(0)\to (T_eG)^2
$$
is a contraction on $B^2_R(0)$ with respect to the 2-norm.
\end{lemma}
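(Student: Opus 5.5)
We will prove the contraction property by estimating the difference of two images directly from the decomposition in Lemma~\ref{lem:Psi_alt}. For $\mbx,\mbx'\in B^2_R(0)$ and fixed data $\mby$, the data term $\lambda M^*\mby$ cancels, so
\begin{equation*}
\Psi_\mby(\mbx,\lambda)-\Psi_\mby(\mbx',\lambda)
= A_\lambda(\mbx-\mbx') + \lambda\bigl(\mathcal R(\mbx,\mby;\lambda)-\mathcal R(\mbx',\mby;\lambda)\bigr).
\end{equation*}
Since $0<\lambda\le\lambda^*\le 1/\mu_{\textnormal{max}}$, the discussion after Lemma~\ref{eq:norm_A} gives $\|A_\lambda\|_2 = 1-\lambda\mu_{\textnormal{min}}$. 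The whole task is therefore to show that the remainder has Lipschitz constant in $\mbx$ strictly smaller than $\mu_{\textnormal{min}}$ on the ball, uniformly in $\lambda$.

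For the remainder, we use the mean value inequality along the segment $s\mapsto \mbx'+s(\mbx-\mbx')$. This segment stays in $B^2_R(0)$ by convexity of the ball. We obtain
\begin{equation*}
\|\mathcal R(\mbx,\mby;\lambda)-\mathcal R(\mbx',\mby;\lambda)\|_2 \le \sup_{\mbz\in B^2_R(0)}\|\partial_0\mathcal R(\mbz,\mby;\lambda)\|_2\,\|\mbx-\mbx'\|_2 .
\end{equation*}
To bound the supremum, we invoke Lemma~\ref{lem:dR}. First choose $\widetilde r>0$ so small that, for all $r\le\widetilde r$, we have $B^2_{\alpha r}(0)\subseteq\mathcal C_\mbx\cap W_\mbx$ and $\{\mby:\max_i\|y_i\|\le r\}\subseteq\mathcal C_\mby\cap W_\mby$. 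Then, using $\|\mby\|_2\le\sqrt N r$, we get
\begin{equation*}
\|\partial_0\mathcal R(\mbz,\mby;\lambda)\|_2\le C(\alpha+\sqrt N)\,r .
\end{equation*}
Combining the two estimates yields the Lipschitz bound
\begin{equation*}
\|\Psi_\mby(\mbx,\lambda)-\Psi_\mby(\mbx',\lambda)\|_2\le \bigl(1-\lambda(\mu_{\textnormal{min}}-C(\alpha+\sqrt N)r)\bigr)\|\mbx-\mbx'\|_2 .
\end{equation*}

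It remains to choose $r_c(\alpha)$. We take
\begin{equation*}
r_c(\alpha):=\min\Bigl(\widetilde r,\ \frac{\mu_{\textnormal{min}}}{2C(\alpha+\sqrt N)}\Bigr),
\end{equation*}
so that the Lipschitz constant is at most $1-\lambda\mu_{\textnormal{min}}/2<1$. The bound $1-\lambda\mu_{\textnormal{min}}/2$ is also nonnegative because $\lambda\mu_{\textnormal{min}}\le 1$. Hence $\Psi_\mby$ is a contraction on $B^2_R(0)$. The condition $\alpha>\alpha^*$ is not needed for the Lipschitz estimate itself. It only matters when this lemma is combined with Lemma~\ref{lem:selfmap}, for instance by taking $\min(r_s(\alpha),r_c(\alpha))$, so that Banach's theorem applies.

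The main obstacle is making the constant $C$ from Lemma~\ref{lem:dR} genuinely uniform in $\lambda\in[0,\lambda^*]$ and valid on the whole ball. This requires that the neighborhoods $\mathcal C_\mbx$, $\mathcal C_\mby$ are fixed before $r$ is chosen, and that $\lambda^*\le\lambda_0$, which holds by the definition of $\lambda^*$. We will check this using analyticity of $\mathcal R$ on a compact set times $[0,\lambda_0]$. Because the ball is convex, no further geometric issue arises.
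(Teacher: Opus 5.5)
Your proposal is correct and follows essentially the same argument as the paper: the decomposition from Lemma~\ref{lem:Psi_alt}, the bound $\|A_\lambda\|_2 = 1-\lambda\mu_{\textnormal{min}}$ for $\lambda\le\lambda^*$, a mean-value estimate along a segment combined with Lemma~\ref{lem:dR} giving the Lipschitz constant $1-\lambda\bigl(\mu_{\textnormal{min}}-C(\alpha+\sqrt N)r\bigr)$, and a choice of $r_c(\alpha)$ bounded by $\widetilde r$ and by a fraction of $\mu_{\textnormal{min}}/\bigl(C(\alpha+\sqrt N)\bigr)$ (the paper uses the factor $1-\epsilon$ where you use $1/2$). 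Relying on the convexity of the ball $B^2_R(0)$ instead of that of $\mathcal C_\mbx$ is a small tidying of the same proof, and you are right that the condition $\alpha>\alpha^*$ plays no role in this lemma itself.
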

\begin{proof}
Lemma~\ref{lem:Psi_alt} ensures that there is a neighborhood $W_{\mbx} \times W_{\mby} \times [0, \lambda_0]$ on which
\begin{align} \label{eq:psi_difference}
    \Psi_{\mby}(\mbx, \lambda) - \Psi_{\mby}(\widetilde{\mbx}, \lambda) = A_\lambda (\mbx - \widetilde{\mbx}) + \lambda \big( \mathcal R(\mbx,\mby;\lambda) - \mathcal R(\widetilde{\mbx},\mby;\lambda) \bigr).
\end{align}
for $\mbx, \widetilde{\mbx} \in W_{\mbx}$.

If $\lambda < 1/\mu_{\textnormal{max}}$ We know from Lemma~\eqref{eq:norm_A} that $A_\lambda$ is a contraction with contraction factor $1 - \lambda \mu_{\textnormal{min}} < 1$, that is,
\begin{align} \label{eq:contraction_A}
\| A_\lambda (\mbx - \widetilde{\mbx}) \|_2 \le (1 - \lambda \mu_{\textnormal{min}}) \| \mbx - \widetilde{\mbx} \|_2.
\end{align}
To combine \eqref{eq:psi_difference} and \eqref{eq:contraction_A}, we must assume $0 < \lambda \le \lambda^* = \textnormal{min}(\lambda_0, 1/\mu_{\textnormal{max}})$.

It remains to bound the rest.
Lemma~\ref{lem:dR} gives 
\begin{equation*} \label{eq:dr_bound}
        \|\partial_0\mathcal R(\mbx,\mby;\lambda)\|_2
    \leq C'(\|\mbx\|_2+\|\mby\|_2)
\end{equation*}
on a smaller compact neighborhood $\mathcal{C}_\mbx \times \mathcal{C}_\mby \times [0, \lambda^*]$.
The constant~$C'$ can be chosen uniformly for $\lambda\in[0,\lambda^*]$.\footnote{We label the constant $C'$ to distinguish it from the constant that appeard in the proof of Lemma~\ref{lem:selfmap}.} This uniformity is important because the subsequent contraction estimate is required to hold simultaneously for all sufficiently small stepsizes.

From our assumptions, we now get
\begin{align} 
    \|\partial_0 \mathcal R(\mbx,\mby;\lambda)\|_2 \le C'(R + \sqrt{N}r), \label{eq:bound_D1_R}
\end{align}
where $r > 0$ is sufficiently small such that $\mbx \in \mathcal{C}_\mbx$ and $\mby \in \mathcal{C}_{\mby}$.

We now want to bound 
$$\mathcal{R}(\mbx, \mby;\lambda) - \mathcal{R}(\widetilde{\mbx}, \mby;\lambda)$$
on $\mathcal{C}_x$.
Let 
$$\ell(s) = \widetilde{\mbx} + s(\mbx - \widetilde{\mbx}), \quad s \in [0,1]$$
be the straight line between $\mbx$ and $\widetilde{\mbx}$. It does not leave the domain because $\mathcal{C}_\mbx$ is convex.
Set 
$$F(s) = \mathcal{R}(\ell(s), \mby; \lambda).$$
Then,
$$F(1) - F(0) = \mathcal{R}(\mbx,\mby;\lambda) - \mathcal{R}(\widetilde{\mbx},\mby;\lambda).$$
We also have
$$F(1) - F(0) = \int_0^1 F'(s) \dd s.$$
Because 
$$F'(s) = \partial_0 \mathcal{R}(\ell(s), \mby;\lambda)(\mbx - \widetilde{\mbx}),$$
it follows that
$$\mathcal{R}(\mbx,\mby;\lambda) - \mathcal{R}(\widetilde{\mbx},\mby;\lambda) = \int_0^1 \partial_0 \mathcal{R}(\ell(s), \mby;\lambda)(\mbx - \widetilde{\mbx}) \dd s.$$
Consequently,
$$\| \mathcal{R}(\mbx,\mby;\lambda) - \mathcal{R}(\widetilde{\mbx},\mby;\lambda) \|_2 \le \int_0^1 \| \partial_0 \mathcal{R}(\ell(s), y;\lambda) \|_2 \; \|(\mbx - \widetilde{\mbx})\|_2 \dd s.$$
Equation~\eqref{eq:bound_D1_R} then yields
\begin{align}
    \left\| \mathcal{R}(\mbx,\mby;\lambda) - \mathcal{R}(\widetilde{\mbx},\mby;\lambda) \right\|_2 &\le \int_0^1 C' (R + \sqrt{N}r) \|(\mbx - \widetilde{\mbx})\|_2 \dd s \nonumber \\
    &= C'(R + \sqrt{N}r) \|(\mbx - \widetilde{\mbx})\|_2. \label{eq:lip_R}
\end{align}
on $\mathcal{C}_x \times \mathcal{C}_y \times [0, \lambda^*]$.
\bigskip

We can finally investigate 
$$\Psi_{\mby}(\mbx,\lambda) - \Psi_{\mby}(\widetilde{\mbx},\lambda).$$
Equation \eqref{eq:psi_difference} yields
\begin{align*}
    \| \Psi_{\mby}(\mbx, \lambda) - \Psi_{\mby}(\widetilde{\mbx}, \lambda) \|_2 &\le \| A_\lambda(\mbx - \widetilde{\mbx}) \|_2 + \lambda \| \mathcal R(\mbx,\mby;\lambda) - \mathcal R(\widetilde{\mbx},\mby;\lambda) \|_2
\end{align*}
on $W_{\mbx} \times W_{\mby} \times [0, \lambda_0]$.
Applying \eqref{eq:contraction_A} and \eqref{eq:lip_R} to this inequality gives
\begin{align*}
    \| \Psi_{\mby}(\mbx, \lambda) - \Psi_{\mby}(\widetilde{\mbx}, \lambda) \|_2 &\le \bigl( 1 - \lambda \mu_{\textnormal{min}}  + \lambda C'(R + \sqrt{N}r) \bigr) \| \mbx - \widetilde{\mbx} \|_2.
\end{align*}
on the (possibly smaller) domain $\mathcal{C}_x \times \mathcal{C}_y \times [0, \lambda^*]$.
Consequently,
$\Psi_{\mby}$ is a contraction on $B^2_R(0) \subseteq \mathcal{C}_x$ when 
$$  1 - \lambda \mu_{\textnormal{min}}  + \lambda C'(R + \sqrt{N}r) < 1,$$
which is equivalent to
$$ C' \left(R + \sqrt{N} r \right) < \mu_{\textnormal{min}}.$$
Using $R = \alpha r$, we obtain
$$ C' \left(\alpha + \sqrt{N} \right) r < \mu_{\textnormal{min}},$$
or, equivalently,
$$ r < \frac{\mu_{\textnormal{min}}}{C' \left(\alpha + \sqrt{N} \right)}.$$
With $\widetilde r > 0$ fulfilling $B^2_{\alpha \widetilde r}(0) \subseteq \mathcal{C}_x$ as well as $\{(y_1,\dots,y_N) \in (T_eG)^N: \textnormal{max}(\| y_i \|) \le r \} \subseteq \mathcal{C}_y$ and $\epsilon << 1$ set 
$$r_c(\alpha) := (1- \epsilon) \; \textnormal{min}\left(\frac{\mu_{\textnormal{min}}}{C' \left(\alpha + \sqrt{N} \right)}, \widetilde r \right).$$
Then, $\Psi_\mby$ is a contraction on $B^2_{\alpha r}(0)$ for all $0 < r \le r_c(\alpha)$.
\end{proof}

The contraction lemma combines the two effects identified above. The linear part contracts with factor $1-\lambda\mu_{\mathrm{min}}$, while the nonlinear remainder contributes at most
$$
    \lambda C'(\alpha+\sqrt N)r.
$$
Thus, for sufficiently small $r$, the nonlinear term reduces the contraction margin, while $\lambda$ determines the overall size of both the linear correction and this nonlinear perturbation. Once $r$ is sufficiently small, the same neighborhood works uniformly for all $\lambda\in(0,\lambda^*]$.

\begin{remark}
The proof also provides an upper bound for the contraction factor of $\Psi$. Under the assumptions of Lemma~\ref{lem:contaction}, we may take
\begin{equation} \label{eq:eta}
    K(\alpha, r, \lambda) = 1-\lambda\eta(\alpha, r),
\qquad
\eta(\alpha, r) := \mu_{\textnormal{min}}-C'(\alpha+\sqrt{N})r>0.
\end{equation}
Here, $C'$ is the constant in the bound on $\partial_0\mathcal R$. While the dependence of the contraction factor on $\lambda$, $\mu_{\textnormal{min}}$, $\alpha$, $N$, and $r$ is explicit, obtaining a numerical convergence-rate estimate requires a computable upper bound for $C'$.
\end{remark}

\subsection{Existence, Uniqueness, and Convergence}

Remember Banach's Fixed Point Theorem:
\begin{theorem}[Banach Fixed Point Theorem~\cite{kreyszig1991introductory}]   
Let $(E,d)$ be a complete metric space and $\varphi: E \to E$ a contraction, that is, $d(\varphi(x), \varphi(y)) \le c\, d(x, y)$ for all $x,y \in E$ and some $0 \le c < 1$. Then, $\varphi$ has a unique fixed point $p \in E$. Furthermore, the sequence $x_{n+1} := \varphi(\mbx_n)$ converges at least with $c$-linear speed to $p$ for any initial value in $E$.   
\end{theorem}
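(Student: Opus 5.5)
We follow the classical Picard iteration argument; only the contraction inequality and completeness of $(E,d)$ are used. Fix an arbitrary $x_0\in E$ and define $x_{n+1}:=\varphi(x_n)$. First, by induction on $n$, the contraction property gives
$$
d(x_{n+1},x_n)\le c^n\, d(x_1,x_0)\qquad\text{for all } n\ge 0 .
$$
Next, for $m>n$ the triangle inequality and the geometric series yield
$$
d(x_m,x_n)\le \sum_{k=n}^{m-1} c^k d(x_1,x_0)\le \frac{c^n}{1-c}\, d(x_1,x_0).
$$
Since $0\le c<1$, the right-hand side tends to $0$ as $n\to\infty$, so $(x_n)$ is a Cauchy sequence. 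Completeness of $E$ then provides a limit $p\in E$.

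To see that $p$ is a fixed point, we use that a contraction is Lipschitz and hence continuous. Alternatively, we estimate directly
$$
d(\varphi(p),p)\le d(\varphi(p),\varphi(x_n))+d(x_{n+1},p)\le c\,d(p,x_n)+d(x_{n+1},p).
$$
Both terms tend to $0$, so $\varphi(p)=p$. For uniqueness, suppose $q$ is another fixed point. Then $d(p,q)=d(\varphi(p),\varphi(q))\le c\,d(p,q)$, and since $c<1$ this forces $d(p,q)=0$, i.e.\ $p=q$. In particular, the limit $p$ does not depend on the choice of $x_0$.

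Finally, for the convergence rate we use the fixed point property of $p$. This gives $d(x_{n+1},p)=d(\varphi(x_n),\varphi(p))\le c\,d(x_n,p)$, and hence
$$
d(x_n,p)\le c^n d(x_0,p).
$$
This is exactly $c$-linear convergence. Letting $m\to\infty$ in the Cauchy estimate also gives the computable a priori bound $d(x_n,p)\le \frac{c^n}{1-c}d(x_1,x_0)$.

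None of these steps is a real obstacle. The only point requiring care is the passage to the limit, where completeness is indispensable. In the application that follows, the relevant space is the closed ball $B^2_R(0)$. It is complete as a closed subset of the finite-dimensional space $(T_eG)^2$. By Lemma~\ref{lem:selfmap} and Lemma~\ref{lem:contaction} (with $r\le\min(r_s(\alpha),r_c(\alpha))$), $\Psi_\mby$ maps this ball into itself and is a contraction there, so the hypotheses of the theorem are met.
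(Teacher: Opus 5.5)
Your proof is correct and is the standard Picard-iteration argument: a geometric bound on successive distances, the Cauchy property, completeness, the fixed point via the triangle-inequality estimate, uniqueness from $c<1$, and the rate $d(x_{n+1},p)\le c\,d(x_n,p)$; this is the proof in the cited source, which the paper invokes without reproducing. The only tacit assumption is $E\neq\emptyset$ when you pick $x_0$, which the statement omits but which is harmless here since $0\in B^2_R(0)$; your closing check of completeness and of the self-map and contraction properties via Lemmas~\ref{lem:selfmap} and~\ref{lem:contaction} matches exactly how the paper applies the theorem.
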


We can now combine the preceding estimates. The self-map lemma provides a closed ball on which the iteration is well-defined and remains inside the local coordinate neighborhood, while the contraction lemma shows that the restriction of $\Psi_\mby$ to this ball has a Lipschitz constant strictly smaller than one. Since the ball is a closed subset of the finite-dimensional space $(T_eG)^2$, it is complete. Banach's Fixed Point Theorem therefore yields a unique fixed point $\hat x$ in this ball, and the iterates generated by $\Psi_\mby$ converge to $\hat x$ from every initial point in the ball.

It remains to identify this fixed point with the bi-invariant geodesic regression estimator. The fixed point equation is formulated in logarithmic coordinates, whereas the estimator is characterized by the vanishing of the two velocity terms defining the regression geodesic. Because the exponential map is a diffeomorphism on the chosen normal convex neighborhood, the fixed point equation can be transferred back to the group. Since $\lambda>0$, the fixed point equation then implies that both velocity terms vanish. Hence the fixed point corresponds exactly to a bi-invariant estimator.

The following theorem summarizes these conclusions and makes the local nature of the result explicit.

\begin{theorem} [Local result for data concentrated at the identity]\label{thm:existance_and_uniquness}
Let $t_1,\dots,t_N\in[0,1]$ be not all equal. Let $\lambda^*$ and $\alpha^*$ be as in Lemma~\ref{lem:selfmap}, and let $\alpha>\alpha^*$. Then there exists $r^*>0$ such that the following holds:
For every $r\in(0,r^*]$, set
$$
    R:=\alpha r.
$$
Then, for every data set $(y_i)_{i=1}^N$ satisfying
$$
    \|y_i\|\le r,\qquad i=1,\dots,N,
$$
and every stepsize $0<\lambda\le\lambda^*$, the bi-invariant geodesic regression problem admits a unique estimator
$$
    \vartheta^{\mathrm{BI}}
    =
    \exp(\hat \mbx)
$$
in the local neighborhood $\exp(B_{\alpha r}^2(0))$.

Moreover, there exists $\eta:= \eta(\alpha, r)$ given in Equation \eqref{eq:eta}, independent of $\lambda$, such that the fixed point iteration
$$
    \mbx_{n+1}:=\Psi_\mby(\mbx_n,\lambda)
$$
converges to $\hat \mbx=(\hat \mbx_0,\hat \mbx_1)$ for every
$$
    \mbx_0\in B_{\alpha r}^2(0),
$$
with the estimate
$$
    \|\mbx_n-\hat \mbx\|_2
    \le
    (1-\lambda\eta)^n
    \|\mbx_0-\hat \mbx\|_2.
$$
In particular, the iteration converges at least linearly in logarithmic coordinates. Consequently, the corresponding group-valued iterates
$$
    \mbg_n:=\exp(\mbx_n)
$$
converge linearly to
$$
    \vartheta^{\mathrm{BI}}=\exp(\hat \mbx).
$$
\end{theorem}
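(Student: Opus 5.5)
The plan is to combine Lemma~\ref{lem:selfmap} and Lemma~\ref{lem:contaction} with Banach's Fixed Point Theorem, and then translate the fixed point back into a solution of \eqref{eq:bir1}--\eqref{eq:bir2}.

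\emph{Choice of radius and Banach step.} First I set
$$
r^*:=\min\bigl(r_s(\alpha),r_c(\alpha)\bigr).
$$
Fix $r\in(0,r^*]$, data with $\|y_i\|\le r$, and $0<\lambda\le\lambda^*$. By Lemma~\ref{lem:selfmap}, $\Psi_\mby$ maps the closed ball $E:=B^2_{\alpha r}(0)$ into itself. $E$ is complete, being closed in the finite-dimensional space $(T_eG)^2$. By Lemma~\ref{lem:contaction} and the subsequent remark, $\Psi_\mby$ is a contraction on $E$ with factor $1-\lambda\eta(\alpha,r)$, where $\eta(\alpha,r)>0$ is independent of $\lambda$. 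Banach's theorem then gives a unique fixed point $\hat\mbx\in E$. Convergence for every $\mbx_0\in E$ follows by iterating the contraction estimate with $\hat\mbx=\Psi_\mby(\hat\mbx,\lambda)$:
$$
\|\mbx_n-\hat\mbx\|_2\le(1-\lambda\eta)^n\|\mbx_0-\hat\mbx\|_2 .
$$
Linear convergence of $\mbg_n=\exp(\mbx_n)$ follows because $\exp$ is smooth, hence Lipschitz on the compact set $E$. The iterates $\mbg_n$ coincide with \eqref{alg:algorithm} by the definition of $\Psi$.

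\emph{Identifying fixed points with estimators.} This step is needed for both existence and uniqueness. The fixed point equation reads
$$
\log\bigl(\Exp_{\exp(\hat\mbx)}(\lambda\mbv)\bigr)=\hat\mbx .
$$
Since $\log$ is injective on $U$, this gives $\Exp_{\hat\mbg}(\lambda\mbv)=\hat\mbg$ with $\hat\mbg=\exp(\hat\mbx)$. By \eqref{eq:EXP}, this means $\exp(\lambda\,\dd\Lt_{\hat\mbg^{-1}}\mbv)=e$ componentwise. The vector $\lambda\,\dd\Lt_{\hat\mbg^{-1}}\mbv$ is small on our neighborhood, namely it lies in the domain where $\exp$ is injective; this follows from the expansion in Lemma~\ref{lem:asymptot}, which shows $\lambda z_j=\mathcal O(r)$. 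Hence $\mbv=0$, because $\lambda>0$, and so $\hat\mbg$ satisfies \eqref{eq:bir1}--\eqref{eq:bir2}.

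Conversely, any estimator $\mbg\in\exp(E)$ has $\mbv=0$. Then $\Exp_{\mbg}(0)=\mbg$, so $\log\mbg$ is a fixed point in $E$ and equals $\hat\mbx$ by uniqueness. This yields uniqueness of the estimator in $\exp(B^2_{\alpha r}(0))$.

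\emph{Main obstacle.} The delicate point is the last identification: ensuring that $\lambda z_j$ lies in the injectivity domain of $\exp$ uniformly over $E$ and the admissible $\lambda$. I would handle it by possibly shrinking $r^*$, using continuity of $\mbv$ with $\mbv(0,0)=0$. A second point to check is that all $W_\mbx$ and $\mathcal C_\mbx$ neighborhoods used in the two lemmas contain $E$. This is already built into the definitions of $r_s$ and $r_c$, so taking the minimum suffices.
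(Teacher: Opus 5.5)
Your proposal is correct and follows essentially the same route as the paper: set $r^*=\min(r_s(\alpha),r_c(\alpha))$, combine the self-map and contraction lemmas with Banach's theorem to get a unique fixed point with factor $1-\lambda\eta$, identify the fixed point with $\mbv=0$ using $\lambda>0$, and transfer linear convergence to the group through the Lipschitz continuity of $\exp$. Two of your steps are not spelled out in the paper, which only says ``$\Exp$ is invertible in $U$'': the converse argument that every estimator in $\exp(B^2_{\alpha r}(0))$ is a fixed point, which is what actually gives uniqueness of the estimator, and the check that $\lambda z_j$ lies in the injectivity domain of $\exp$.
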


% \begin{theorem}   
%     Let $t_1,\dots,t_N \in[0,1]$ not all equal. Let further $\lambda^*$ and $\alpha^*$ be as in Lemma~\ref{lem:selfmap},
%     and $\alpha > \alpha^*$. Then there exist $r^*>0$ such that the following holds:
%     For every $r\in(0,r^*]$, set
%     $$
%     R:= \alpha r.
%     $$ 
%     Then, for every data set
%     $(y_i)_{i=1}^N$ satisfying
%     $$
%     \| y_i \| \le r, \qquad i=1,\dots,N,
%     $$
%     and every $0<\lambda\le\lambda^*$, there exists a unique bi-invariant estimator $\mbthetabi$ for geodesic regression in that local neighborhood. Furthermore, the Algorithm \textcolor{red}{proposed by Schade et al.} converges at least linearly to $\mbthetabi$ when the stepsize is sufficiently small. 
% \end{theorem}
\begin{proof}
    Combining Lemmas \ref{lem:selfmap} and \ref{lem:contaction}, we see that under our assumptions there exists a positive number $r^*(\alpha) := \textnormal{min}(r_c(\alpha), r_s(\alpha)) > 0$ such that for all $0 < \lambda \le \lambda^*$ the update map
    $\Psi_{\mby}$ is a contractive self-map on $B^2_{\alpha r}(0) \subseteq V^2$.
    In this case, Banach's Fixed Point Theorem applies: $\Psi_{\mby}$ has a unique fixed point $\hat \mbx \in B^2_{\alpha r}(0)$.
    But since $\Exp$ is invertible in $U$ and $\lambda > 0$, this is only possible if 
    $$\mbv(\exp(\hat \mbx), \mbf, \mbt) = 0.$$ Therefore, $\hat \mbg := \exp(\hat \mbx)$ consists of the starting and end points of a bi-invariant estimator for geodesic regression for the given data. 
    
    Banach's theorem also implies that the sequence $\mbx_{n+1}:= \Psi_\mby(\mbx_n, \lambda)$ converges at least with linear speed to $\hat \mbx$ for all initial values in $B^2_{\alpha r}(0)$. 
    Since the exponential map is smooth, it is Lipschitz on the compact neighborhood under consideration; hence linear convergence in logarithmic coordinates implies linear convergence of the corresponding group-valued iterates.
    Therefore, the sequence $\mbg_{n}:= \exp(\mbx_n)$ provided by Algorithm~\ref{alg:algorithm} converges at least with linear speed to $\hat \mbg$ for all initial values in $\exp(B^2_{\alpha r}(0))$.
\end{proof}

The theorem shows that the estimator is locally well posed: Sufficiently concentrated data leads to a unique estimator in a neighborhood of the identity, and the proposed fixed point iteration converges to this estimator. The assumptions are local because the proof relies on logarithmic coordinates and on controlling the nonlinear terms in a neighborhood where the Cartan–Schouten geodesics admit the required analytic representation.

\begin{remark}[Convergence behavior]
The convergence estimate in Theorem~\ref{thm:existance_and_uniquness} provides a quantitative characterization of the behavior of the fixed point iteration. In particular,
% if $\hat{x}$ denotes the unique fixed point and $x_k$ the $k$-th iterate,
$$
\|\mbx_n-\hat{\mbx}\|_2
\leq
(1-\lambda\eta)^n
\|\mbx_0-\hat{\mbx}\|_2.
$$
Thus, the iteration converges at least linearly, with contraction factor $1-\lambda\eta<1$. For fixed $\eta$, increasing $\lambda$ improves this guaranteed convergence rate, since the contraction factor decreases linearly with $\lambda$. 
Conversely, for small $\lambda$,
$$
(1-\lambda\eta)^n
\leq
e^{-\lambda\eta n},
$$
so that the number of iterations required to reduce the initial error by a prescribed factor grows on the order of $1/\lambda$. 
The estimate should be understood as a theoretical upper bound: The actual convergence may be faster, and its numerical value depends on the constant $C'$ entering the estimate for the nonlinear remainder.
\end{remark}

\subsection{Equivariance and General Local Data}

As our final result, we use the estimator's equivariance to transfer the result to sufficiently concentrated data in a general location.
\begin{corollary}[Local result for general concentrated data]
Under the assumptions of Theorem~\ref{thm:existance_and_uniquness}, let
$h\in G$ and let $f_1,\dots,f_N\in G$ be such that
$$
h^{-1}f_i\in U,
\qquad
\bigl\|\log(h^{-1}f_i)\bigr\|\le r,
\qquad i=1,\dots,N,
$$
for some $r\in(0,r^*(\alpha)]$.
Then there exists a unique bi-invariant estimator
$\vartheta^{\mathrm{BI}}$ in the translated local neighborhood
$$
h\exp\bigl(B_{\alpha r}^2(0)\bigr)
$$
and the fixed point iteration of Theorem~\ref{thm:existance_and_uniquness}
converges at least linearly to $\vartheta^{\mathrm{BI}}$ for all initial values in this neighborhood.

More precisely, if
$$
\tilde f_i:=h^{-1}f_i
$$
and $\tilde \mbx_n$ denotes the iterates for the translated data, then
$$
\tilde \mbx_n\longrightarrow \hat \mbx
$$
at least linearly, and the corresponding iterates for the original data satisfy
$$
\mbg_n=\mbg\exp(\tilde \mbx_n)
\longrightarrow
\mbg\exp(\hat \mbx)
=\vartheta^{\mathrm{BI}}.
$$
The same convergence rate as in Theorem~\ref{thm:existance_and_uniquness}
is obtained.
\end{corollary}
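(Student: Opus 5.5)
The proof reduces to Theorem~\ref{thm:existance_and_uniquness} via the left-translation equivariance of the estimator and of the algorithm. Throughout, I read $\mbg$ in the displayed formula as $(h,h)$, so that $\mbg\exp(\tilde\mbx_n)=\Lt_h(\exp(\tilde\mbx_n))$ componentwise.

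\emph{Step 1.} Set $\tilde f_i:=h^{-1}f_i=\Lt_{h^{-1}}(f_i)\in U$ and $y_i:=\log(\tilde f_i)$, so that $\|y_i\|\le r\le r^*(\alpha)$. Theorem~\ref{thm:existance_and_uniquness} then applies to the translated data. It gives a unique estimator $\exp(\hat\mbx)$ with $\hat\mbx\in B^2_{\alpha r}(0)$. It also gives iterates $\tilde\mbx_n$ that converge to $\hat\mbx$ with rate $(1-\lambda\eta)^n$ from every initial value $\tilde\mbx_0\in B^2_{\alpha r}(0)$.

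\emph{Step 2 (existence).} I transfer the estimator with Theorem~\ref{thm:main_trans}, using $\Tt_h=\Lt_h$. Since $f_i=\Lt_h(\tilde f_i)$, the estimator for $\mbf$ is $\Lt_h(\exp(\hat\mbx))=h\exp(\hat\mbx)$, which lies in $h\exp(B^2_{\alpha r}(0))$.

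Theorem~\ref{thm:main_trans} is stated for data in $U$, and the $f_i$ themselves need not lie in $U$. I would therefore argue directly from the estimating equations instead. By the equivariance of $\mbv$ quoted in the proof of Proposition~\ref{prop:equivariance_algorithm}, we have $\mbv(\Lt_h\mbg,\Lt_h\tilde\mbf,\mbt)=\dd\Lt_h\,\mbv(\mbg,\tilde\mbf,\mbt)$. Because $\dd\Lt_h$ is invertible, $\mbv$ vanishes at $\mbg$ for the data $\tilde\mbf$ if and only if it vanishes at $h\mbg$ for the data $\mbf$. This identity only needs the geodesics to be defined in the translated normal convex neighborhood $hU$, and $hU$ is normal convex because left translation maps CCS geodesics to CCS geodesics.

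\emph{Step 3 (uniqueness).} Suppose $\mbg'\in h\exp(B^2_{\alpha r}(0))$ solves \eqref{eq:bir1}--\eqref{eq:bir2} for $\mbf$. By the equivalence in Step 2, $h^{-1}\mbg'=\exp(\mbx')$ with $\mbx'\in B^2_{\alpha r}(0)$ solves them for $\tilde\mbf$. Then $\Psi_\mby(\mbx',\lambda)=\mbx'$, because $\Exp_{\mbg}(0)=\mbg$. Uniqueness of the fixed point in Theorem~\ref{thm:existance_and_uniquness} forces $\mbx'=\hat\mbx$.

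\emph{Step 4 (convergence).} Take an initial value $\mbg_0=h\exp(\tilde\mbx_0)$ with $\tilde\mbx_0\in B^2_{\alpha r}(0)$. Proposition~\ref{prop:equivariance_algorithm} gives $\mbg_n=\Lt_h(\exp(\tilde\mbx_n))$ for all $n$, since all $\tilde\mbx_n$ stay in the ball by Lemma~\ref{lem:selfmap}. So $\mbg_n\to h\exp(\hat\mbx)=\vartheta^{\mathrm{BI}}$. The rate in logarithmic coordinates is the same, because $\tilde\mbx_n$ are exactly the iterates of Theorem~\ref{thm:existance_and_uniquness}. On the group the rate is also linear, since $\Lt_h\circ\exp$ is Lipschitz on the compact ball.

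The main obstacle is the bookkeeping in Step 2. Theorem~\ref{thm:main_trans} and Proposition~\ref{prop:equivariance_algorithm} are formulated with $U$ fixed, while the original data lie in $hU$. This is handled by working with the pointwise equivariance of $\mbv$ and $\Exp$ (Lemma~\ref{lem:equivariance_exp}) on the normal convex set $hU$, rather than citing the theorem verbatim.
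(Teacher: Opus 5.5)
Your proposal is correct and takes the same route as the paper, whose proof simply combines Proposition~\ref{prop:equivariance_algorithm} with Theorem~\ref{thm:existance_and_uniquness} to transport the identity-centered result by $\Lt_h$. Using the pointwise equivariance of $\mbv$ on the normal convex set $hU$ (because the original $f_i$ need not lie in $U$) fills in bookkeeping that the paper leaves implicit; it is not a different argument.
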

\begin{proof}
    It follows directly from Proposition~\ref{prop:equivariance_algorithm} and Theorem~\ref{thm:existance_and_uniquness}.
\end{proof}
With this, we have established the local existence and uniqueness of the estimator and the local linear convergence of the algorithm in the whole group $G$.

\printbibliography

\end{document}